\tikzstyle{punkt}=[circle, fill=black, minimum size=1mm,inner sep=0pt, draw]
\def\NZQ{\mathbb}               
\def\ZZ{{\NZQ Z}}
\def\RR{{\NZQ R}}
\def\FF{{\NZQ F}}
\def\frk{\frak}               
\def\Phi{{\frk n}}
\def\Phi{{\frk N}}
\def\MC{{\mathcal C}}
\def\KK{{\mathbb K}}
\def\opn#1#2{\def#1{\operatorname{#2}}} 
\opn\chara{char}
\opn\length{\ell}
\opn\pd{pd}
\opn\rk{rk}
\opn\projdim{proj\,dim}
\opn\injdim{inj\,dim}
\opn\rank{rank}
\opn\depth{depth}
\opn\grade{grade}
\opn\height{height}
\opn\embdim{emb\,dim}
\opn\codim{codim}
\opn\Tr{Tr}
\opn\bigrank{big\,rank}
\opn\superheight{superheight}
\opn\lcm{lcm}
\opn\trdeg{tr\,deg}
\opn\reg{reg}
\opn\lreg{lreg}
\opn\ini{in}
\opn\lpd{lpd}
\opn\size{size}
\opn\bigsize{bigsize}
\opn\cosize{cosize}
\opn\bigcosize{bigcosize}
\opn\sdepth{sdepth}
\opn\sreg{sreg}
\opn\link{link}
\opn\fdepth{fdepth}
\opn\lin{lin}
\opn\ini{in}
\opn\div{div}
\opn\Div{Div}
\opn\cl{cl}
\opn\Cl{Cl}
\opn\Spec{Spec}
\opn\Supp{Supp}
\opn\supp{supp}
\opn\Sing{Sing}
\opn\Ass{Ass}
\opn\Min{Min}
\opn\Mon{Mon}
\opn\dstab{dstab}
\opn\astab{astab}
\opn\Syz{Syz}
\opn\Ann{Ann}
\opn\Rad{Rad}
\opn\Soc{Soc}
\opn\Im{Im}
\opn\Ker{Ker}
\opn\Coker{Coker}
\opn\Am{Am}
\opn\Hom{Hom}
\opn\Tor{Tor}
\opn\Ext{Ext}
\opn\End{End}
\opn\Aut{Aut}
\opn\id{id}
\opn\nat{nat}
\opn\pff{pf}
\opn\Pf{Pf}
\opn\GL{GL}
\opn\SL{SL}
\opn\mod{mod}
\opn\ord{ord}
\opn\Gin{Gin}
\opn\Hilb{Hilb}
\opn\sort{sort}
\opn\initial{init}
\opn\ende{end}
\opn\height{height}
\opn\type{type}
\opn\mdeg{mdeg}
\opn\aff{aff}
\opn\con{conv}
\opn\relint{relint}
\opn\st{st}
\opn\lk{lk}
\opn\cn{cn}
\opn\core{core}
\opn\vol{vol}
\opn\link{link}
\opn\star{star}
\opn\lex{lex}
\opn\sign{sign}
\opn\gr{gr}
\def\pot#1#2{#1[\kern-0.28ex[#2]\kern-0.28ex]}
\opn\dirlim{\underrightarrow{\lim}}
\opn\inivlim{\underleftarrow{\lim}}
\let\iso=\cong
\def\Implies{\ifmmode\Longrightarrow \else
        \unskip${}\Longrightarrow{}$\ignorespaces\fi}
\def\implies{\ifmmode\Rightarrow \else
        \unskip${}\Rightarrow{}$\ignorespaces\fi}
\def\iff{\ifmmode\Longleftrightarrow \else
        \unskip${}\Longleftrightarrow{}$\ignorespaces\fi}
\newtheorem{Theorem}{Theorem}[section]
 \newtheorem{Lemma}[Theorem]{Lemma}
 \newtheorem{Corollary}[Theorem]{Corollary}
 \newtheorem{Proposition}[Theorem]{Proposition}
 \newtheorem{Remark}[Theorem]{Remark}
 \newtheorem{Example}[Theorem]{Example}
 \newtheorem{Definition}[Theorem]{Definition}
 \newtheorem{Problem}[Theorem]{Problem}
 \newtheorem{Conjecture}[Theorem]{Conjecture}
\let\epsilon\varepsilon
\let\kappa=\varkappa
\def\pnt{{\raise0.5mm\hbox{\large\bf.}}}
\begin{document}
\title{Cycle algebras and polytopes of matroids}
\author {Tim R\"omer and Sara Saeedi Madani}

\address{Tim R\"omer, Universit\"at Osnabr\"uck, Institut f\"ur Mathematik, 49069 Osnabr\"uck, Germany}
\email{troemer@uni-osnabrueck.de}

\address{Sara Saeedi Madani, Faculty of Mathematics and Computer Science, Amirkabir University of Technology (Tehran Polytechnic), Tehran, Iran, and School of Mathematics, Institute for Research in Fundamental Sciences (IPM), P.O. Box: 19395‐
	5746, Tehran, Iran}
\email{sarasaeedi@aut.ac.ir}

\begin{abstract}
Cycle polytopes of matroids have been introduced in combinatorial optimization as a generalization of important classes of polyhedral objects like cut polytopes and Eulerian subgraph polytopes associated to graphs. Here we start an algebraic and geometric investigation of these polytopes by studying their toric algebras, called cycle algebras, and their defining ideals. Several matroid operations are considered which determine faces of cycle polytopes that belong again to this class of polyhedral objects. As a key technique used in this paper, we study certain minors of given matroids which yield algebra retracts on the level of cycle algebras. In particular, that allows us to use a powerful algebraic machinery. As an application, we study highest possible degrees in minimal homogeneous systems of generators of defining ideals of cycle algebras as well as interesting cases of cut polytopes and Eulerian subgraph polytopes.		
\end{abstract}

\thanks{
	 The research of the second author was in part supported by a grant from IPM (No. 1400130113).
}

\subjclass[2010]{Primary 05E40, 05B35; Secondary 52B20, 90C27.}
\keywords{Algebra retract, cut polytope, cycle algebra, cycle ideal, cycle polytope, Eulerian subgraph polytope, matroid}

\maketitle

\section{Introduction}\label{introduction}

There are various ways to associate polyhedra to objects of interest in combinatorial optimization and discrete mathematics. A prominent example with many applications is the \emph{cut polytope}~${\mathrm{Cut}}^{\square}(G)$ of a graph~$G$ which has taken the attention of researchers from many fields. Cut polytopes are well studied, see for example~\cite{BM, D, DD, DL1}. Indeed, cut polytopes are closely related to some well-known problems, like the \emph{MAXCUT} problem (see for example~\cite{DL, DL2, DL3}) and the \emph{four color} theorem in graph theory (see \cite{LM}).
Several geometric properties of cut polytopes of graphs have been studied for instance in~\cite{CKNR, KR, LM, O1}.  For more information about cut polytopes of graphs, see in particular the book of Deza and Laurent~\cite{DL}. 

An important class of classical combinatorial objects is the one of matroids. As a generalization of polyhedral objects like cut polytopes, Barahona and Gr\"otschel introduced in \cite{BGr} 
the cycle polytope $P_{\mathrm{Cyc}}(M)$ associated to a matroid $M$. 
From the geometric point of view these polytopes are the core objects studied in this paper. Observe that in the special case that the underlying matroid is the cographic matroid~$M(G)^*$ of a graph~$G$, the cycle polytope coincides with the cut polytope~${\mathrm{Cut}}^{\square}(G)$. A description of the facets of cycle polytopes of binary matroids as well as other properties of interest can be found for instance in \cite{BS, BGr, CP, GLPT, GT, GT1, KS}.  

Another special case of interesting cycle polytopes of matroids arises when the underlying matroid $M(G)$ is graphic for a given graph $G$. This polytope is called the \emph{Eulerian subgraph polytope} of~$G$ and we denote it by~$\mathrm{Euler}(G)$, see for example~\cite{BGr}. Eulerian subgraphs occur in various context in graph theory, see for example~\cite{Catlin}.  

In general, there are also toric algebras attached to $0/1$-polytopes which have been of great interest from the point of view of both algebraic geometry and commutative algebra. See the books~\cite{BG, CLS, MS} concerning such algebras and their geometrical aspects. In the particular case of cut polytopes, the aforementioned toric algebras and their defining ideals, which were studied first in~\cite{SS}, are called \emph{cut algebras} and \emph{cut ideals}, respectively. For further studies around cut algebras and ideals, see, e.g.,~\cite{En, KR, KNP, NP, O2, PS, RS}. For applications to algebraic statistics related to binary graph models, Markov random fields and phylogenetic models on split systems as a generalization of binary Jukes-Cantor models, see for example~\cite{SS}.     

In this paper, besides a better understanding of cycle polytopes of matroids in general, we study the associated toric algebras and ideals, called \emph{cycle algebras} and \emph{cycle ideals}. 
One of our main approaches is based on the investigation of certain operations on matroids to obtain faces of cycle polytopes,
which belong again to this class of objects, as well as induced algebra retracts of cycle algebras.

The organization of the paper is as follows. In Section~\ref{matroids}, we recall some ingredients on matroid theory used throughout the paper. In particular, we give a brief overview on some classical operations on matroids as well as  well-known classes of matroids. 

In Section~\ref{cycle polytopes}, cycle polytopes are defined and basic properties of them are studied. In particular,
operations on matroids are discussed which
yield faces of cycle polytopes belonging again to this class  of polytopes. We also pay particular attention to two special classes of cycle polytopes: cut and Eulerian subgraph polytopes.

In Section~\ref{algebra retracts}, we introduce cycle algebras and cycle ideals of matroids. Retracts of cycle algebras are considered to understand transition phenomena of algebraic properties of interest.
In particular, we study retracts obtained by faces of cycle polytopes and very useful retracts which do not arise in this way. 
For this purpose, we first define the new notion of a \emph{matroidal retract} of a matroid in Definition~\ref{matroidal retract-def}. 
Then, as one of our main results, Theorem~\ref{matroidal retract-theorem} states that matroidal retracts induce algebra retracts. 
We also introduce \emph{binary matroidal retracts} as a special case of matroidal retracts (see Definition~\ref{binary matroidal retract-def}). 
Finally, combining binary matroidal retracts with classical deletions as well as certain types of contractions, we get a new type of minors of binary matroids, which we call \emph{generalized series minors} (in short, g-series minors) of a given binary matroid and which are crucial in the rest of the paper. 

In Section~\ref{Cographic case}, we study g-series minors in the case of cographic matroids. Theorem~\ref{neighborhood-g-series minor-corollary} has in particular as a corollary the main results in \cite{RS}, which provides algebra retracts of cut algebras induced by neighborhood-minors of graphs.        

In Section~\ref{highest degree}, we study highest possible degrees in minimal homogeneous systems of generators of cycle ideals. As starting point, in Lemma~\ref{zero ideal} it is characterized when the ideal is zero in terms of data of the underlying matroid. 
In Lemma~\ref{mu} it is also observed that cycle ideals never contain linear forms. 
Moreover, Corollary~\ref{mu comparison} yields 
inequalities between the highest degree $\mu(M)$ of minimal homogeneous systems of generators of cycle ideals of a matroid $M$ and the ones obtained by various types of minors of $M$. In Theorem~\ref{simplification} we get cases where such inequalities are equalities.

In Section~\ref{degree 2}, we focus on small values of $\mu(M)$ for binary matroids. We discuss certain necessary and sufficient conditions for $\mu(M)\leq 2$ and $\mu(M)\leq 5$, respectively. The aforementioned conditions are given in terms of different excluded minors. In particular, all graphic and cographic matroids~$M$ with $\mu(M)\leq 2$ are classified in one of the main results of the paper, Theorem~\ref{degree2-characterization}. We also discuss 
the relationship of the results of this section and two conjectures posed in~\cite{SS}.   

Throughout the paper, we discuss examples and also pose several problems and conjectures arising from different parts of our work.

\section{Ingredients from matroid theory}\label{matroids}

In this section, we give a brief overview of some properties of  matroids and certain operations on them as well as of some well-known classes of matroids which are of importance for this work. For a general discussion on matroids see, e.g., \cite{Ox, Ox1}.   

Let $M$ be a matroid on the ground set $E(M)$, whose set of \emph{independent} sets, \emph{bases} (i.e.~maximal independent sets) and \emph{circuits} (i.e.~minimal dependent sets) are denoted by $\mathcal{I}(M)$, $\mathcal{B}(M)$ and $\mathcal{C}(M)$, respectively. We may also write $E$, $\mathcal{I}$, $\mathcal{B}$ and $\mathcal{C}$ for the aforementioned sets if the matroid $M$ is known from the context.  

The \emph{dual} matroid $M^*$ of $M$ is the matroid with the same ground set $E$ as $M$ and whose set of bases is defined as $\mathcal{B}(M^*)=\{E-B:B\in \mathcal{B}(M)\}$.      
It is well-known that a set $C\subseteq E$ is a circuit of $M^*$ if and only if it is a minimal set having non-empty intersection with every basis of $M$. The elements of $\mathcal{B}(M^*)$ and $\mathcal{C}(M^*)$ are also called \emph{cobases} and \emph{cocircuits} of $M$. For any circuit $C$ and cocircuit $C^*$, one has 
\begin{equation}\label{intersection of circuit and cocircit}
|C\cap C^*|\neq 1; 
\end{equation} 
see, e.g., \cite[Proposition~2.1.11]{Ox}. Observe that $(M^*)^*=M$.    

An element $e\in E$ is called a \emph{loop} of $M$ if $\{e\}$ is a circuit. A pair of elements $e,f\in E$ are called \emph{parallel} in $M$ if $\{e,f\}$ is a circuit. A \emph{parallel class} of $M$ is a maximal subset of $E$ with the property that any two distinct elements of it are parallel, and no element is a loop. The loops, parallel elements and parallel classes of the dual matroid $M^*$ are called \emph{coloops}, \emph{coparallel elements} and \emph{coparallel classes} of $M$. Coparallel classes of $M$ are also known as its \emph{series classes}. A matroid is said to be \emph{simple} (resp.~\emph{cosimple}) if it has no loops (resp.~coloops) and no non-trivial parallel (resp.~coparallel) classes. In particular, if $M$ is simple (resp.~cosimple), then $\{e\}$ is a parallel (resp.~coparallel) class of $M$ for any $e\in E$ and these are the only parallel (resp.~coparallel) classes.

Recall that $M$ is connected if and only if for any two distinct elements of $E$, there is a circuit containing both of them; see, e.g., \cite[Proposition~4.1.4]{Ox}. It is well-known that $M$ is connected if and only if $M^*$ is connected; see, e.g.,  \cite[Corollary~4.2.8]{Ox}.        

Next, we recall two important matroid operations, namely deletion and contraction. By the \emph{deletion} of $e\in E$ from $M$, one obtains a matroid denoted by $M\setminus e$ with the ground set $E-\{e\}$ and  
$
\mathcal{C}(M\setminus e)=\{C\subseteq E-\{e\}:C\in \mathcal{C}(M)\}. 
$
Repeating this procedure yields a deletion of a subset of the ground set. In particular, if $X\subseteq E$, then the \emph{restriction} of $M$ to $X$ is defined as the deletion of $E-X$ from $M$, which is denoted by $M|X$. This is the matroid on $X$ with 
$
\mathcal{C}(M|X)=\{C\subseteq X:C\in \mathcal{C}(M)\}.
$

If $e$ is not a loop of $M$, then by the \emph{contraction} of $e$ in $M$, one gets a matroid, denoted by $M/e$, with the ground set $E-\{e\}$ whose 
circuits are the minimal elements of 
$\{C-\{e\}: C \in \mathcal{C}(M)\}$. If $e$ is a loop of $M$, then by definition $M/e:=M\setminus e$. 
Similar to deletion, one can contract a subset $T$ of $E$ in $M$. Then one obtains the matroid $M/T$ on the ground set $E-T$ and 
whose circuits are the minimal non-empty elements of 
$\{C-T: C \in \mathcal{C}(M)\}$. 

Observe that duality, deletion and contraction are related to each other as follows:
\begin{equation}\label{dual-deletion-contraction}
M^*/T={(M\setminus T)}^*~~~~\text{and}~~~~M^*\setminus T={(M/T)}^*.
\end{equation}

A \emph{minor} of a matroid $M$ is a matroid which can be obtained from $M$ by a sequence of deletions and contractions. There are special types of minors in the literature such as \emph{parallel minors} and \emph{series minors} which we recall in the following: A \emph{parallel minor} of $M$ is a matroid which can be obtained from $M$ by a sequence of contractions and \emph{parallel deletions}. Here, a parallel deletion of $M$ is a matroid of the form $M\setminus e$ where $e$ is contained in a $2$-circuit of $M$. A \emph{series minor} of $M$ is a matroid which can be obtained from $M$ by a sequence of deletions and \emph{series contractions}. Here, a series contraction of $M$ is a matroid of the form $M/e$ where $e$ is contained in a $2$-cocircuit of $M$. 

Observe that $N$ is a parallel minor of $M$ if and only if $N^*$ is a series minor of $M^*$.

Let $M$ and $N$ be two matroids. Then $M$ is said to be $N$-\emph{minor~free} if $M$ has no minor isomorphic to $N$. The notion of $N$-\emph{parallel minor~free} and $N$-\emph{series minor~free} are defined analogously.  

Next, we recall another useful operation on matroids. Let $M_1$ and $M_2$ be two matroids with $E(M_1)\cap E(M_2)=\emptyset$. The \emph{direct sum} or \emph{$1$-sum} $M_1\oplus M_2$ of $M_1$ and $M_2$ is the matroid with $E(M_1\oplus M_2)=E(M_1)\cup E(M_2)$, 
\[
\mathcal{I}(M_1\oplus M_2)=\{I_1\cup I_2: I_i\in \mathcal{I}(M_i),i=1,2\}\quad \text{and} \quad 
\mathcal{C}(M_1\oplus M_2)=\mathcal{C}(M_1)\cup \mathcal{C}(M_2). 
\]
In the remaining part of this section, we briefly recall important classes of matroids which will be used throughout the paper. Let $m,n$ be non-negative integers. The \emph{uniform matroid} $U_{m,n}$ is a matroid on the ground set $E$ of cardinality $n$ whose independent sets are exactly the subsets of $E$ of cardinality at most $m$. Therefore,
\[
\mathcal{C}(U_{m,n})=\{C\subseteq E: |C|=m+1\}.
\]
In particular, the uniform matroids $U_{n,n}$ have no circuits. Indeed, they are the only matroids with this property, and are called \emph{free}. Moreover, $U_{0,0}$, which is the unique matroid with the empty ground set, is called the 
\emph{empty matroid}.    

Let $A$ be an $m\times n$-matrix over a field $\KK$, and let $E$ be the set of column labels of $A$. Moreover, let $\mathcal{I}$ be the set of subsets $X$ of $E$ for which the multiset of columns labeled by $X$ is linearly independent in $\KK^m$. Then it is easily seen that $(E,\mathcal{I})$ is a matroid which is called the \emph{vector matroid} of $A$. A matroid $M$ which is isomorphic to the vector matroid of a matrix  over a field $\KK$ is said to be \emph{representable} over $\KK$. 
A matroid which is representable over the field $\FF_2$, i.e.~the unique field with two elements, is called
\emph{binary}. There are various characterizations of binary matroids and here we summarize some of them which are used throughout the paper:

\begin{Theorem}\label{binary}
	{\em(}\cite[Theorem~9.1.2 and Theorem~9.1.5]{Ox}{\em)}
	Let $M$ be a matroid. Then the following statements are equivalent:
	\begin{enumerate}
		\item[{\em(a)}] $M$ is binary;
		\item[{\em(b)}] $M$ is $U_{2,4}$-minor-free;
		\item[{\em(c)}] For every circuit $C$ and cocircuit $C^*$ of $M$, $|C\cap C^*|$ is even;
		\item[{\em(d)}] If $C_1$ and $C_2$ are distinct circuits of $M$, then 
		$C_1\Delta C_2=(C_1\cup C_2)\setminus (C_1\cap C_2)$
		is a disjoint union of circuits. 
	\end{enumerate}
\end{Theorem}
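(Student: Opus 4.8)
The plan is to establish the short implications out of (a) by linear algebra over $\FF_2$ and then treat the two genuinely hard converse directions separately; I expect the latter to be the real work, and for a self-contained account one would ultimately invoke \cite{Ox} there.

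First I would fix a representation of $M$ over $\FF_2$ by a matrix $A$ with column set $E$, and recall the two orthogonal-complementary subspaces of $\FF_2^{E}$ attached to it: the cocycle space $\mathcal{Z}^*$ (the row space of $A$) and the cycle space $\mathcal{Z}=(\mathcal{Z}^*)^{\perp}$, whose minimal nonzero supports are precisely the cocircuits and the circuits of $M$, respectively. From $\mathcal{Z}\perp\mathcal{Z}^*$ the implication (a) $\Rightarrow$ (c) is immediate: for a circuit $C$ and cocircuit $C^*$ the characteristic vectors $\chi_C,\chi_{C^*}$ are $0/1$ vectors with $\chi_C\cdot\chi_{C^*}=|C\cap C^*|\bmod 2=0$. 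For (a) $\Rightarrow$ (d) I would use that $\mathcal{Z}$ is an $\FF_2$-subspace, so $\chi_{C_1}+\chi_{C_2}=\chi_{C_1\Delta C_2}$ lies in $\mathcal{Z}$ and is nonzero since $C_1\neq C_2$; a routine induction (peel off a circuit contained in the support and subtract) then shows every nonzero vector of $\mathcal{Z}$ has support a disjoint union of circuits, giving (d). For (a) $\Rightarrow$ (b) I would note that deletion and contraction preserve representability over a fixed field, while $U_{2,4}$ is not binary, as it would require four distinct nonzero vectors of $\FF_2^{2}$, which has only three.

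Next I would close the loop among the combinatorial conditions. The equivalence (c) $\Leftrightarrow$ (d) can be obtained intrinsically from the circuit and cocircuit axioms, the key elementary input being \eqref{intersection of circuit and cocircit}: orthogonality of the $\FF_2$-span of the $\chi_C$ against every $\chi_{C^*}$ is exactly the parity statement (c), and it is in turn equivalent to that span being closed under symmetric differences down to disjoint unions of circuits, which is (d).

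The hard part will be the two reconstructions of an actual representation, namely (d) $\Rightarrow$ (a) (and likewise (c) $\Rightarrow$ (a)) together with the excluded-minor direction (b) $\Rightarrow$ (a). For (d) $\Rightarrow$ (a) the strategy is to fix a basis $B$, form for each $e\in E\setminus B$ the fundamental circuit $C_e\subseteq B\cup\{e\}$, and define the $B\times E$ matrix over $\FF_2$ whose columns are the standard basis vectors on $B$ and $\chi_{C_e\cap B}$ on $e\notin B$; the technical heart is to verify, using hypothesis (d), that the circuits of this vector matroid are exactly $\mathcal{C}(M)$. The direction (b) $\Rightarrow$ (a) is Tutte's excluded-minor characterization: taking a minor-minimal non-binary matroid and analyzing a circuit/cocircuit pair meeting in an odd number of elements, one extracts four elements that after a suitable deletion and contraction form $U_{2,4}$. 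Both of these are classical and delicate, so I would present the forward directions in full and cite \cite[Theorem~9.1.2 and Theorem~9.1.5]{Ox} for the converses.
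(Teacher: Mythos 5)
The paper gives no proof of this theorem: it is stated as a direct quotation of \cite[Theorem~9.1.2 and Theorem~9.1.5]{Ox}, so there is no in-paper argument to compare against, and your plan of proving the implications out of (a) while deferring the converses to the same source is entirely consistent with the paper's treatment. The parts you do prove are correct: (a) $\Rightarrow$ (c) and (a) $\Rightarrow$ (d) via the orthogonal cycle and cocycle spaces of an $\FF_2$-representation (including the peeling-off induction for disjoint unions of circuits), and (a) $\Rightarrow$ (b) via minor-closedness of representability and the fact that $\FF_2^2$ has only three nonzero vectors. The one place where you overstate how easy things are is the claimed intrinsic equivalence (c) $\Leftrightarrow$ (d). The direction (c) $\Rightarrow$ (d) does go through cleanly: if $X\neq\emptyset$ meets every cocircuit in an even number of elements, then no cocircuit meets $X$ in a single element, so the restriction to $X$ has no coloops and $X$ contains a circuit $C$; since $|C\cap C^*|$ is even by (c), the set $X\setminus C$ again meets every cocircuit evenly and one inducts. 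But (d) $\Rightarrow$ (c) is genuinely delicate (it is part of what Oxley's Theorem~9.1.2 establishes), and ``orthogonality of the span against the cocircuits is equivalent to closure under symmetric difference'' is a slogan, not an argument. This does not break your overall logic, because you separately cite both (c) $\Rightarrow$ (a) and (d) $\Rightarrow$ (a), and those together with your proved forward implications already close every equivalence; you should therefore either drop the (c) $\Leftrightarrow$ (d) paragraph or fold it into the citation rather than present it as elementary.
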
  

It is straightforward from Theorem~\ref{binary} that the dual of a binary matroid is binary as well. Moreover, the class of binary matroids is minor-closed; see, e.g., \cite[Proposition~3.2.4]{Ox}. In particular, all different types of minors of binary matroids, which were mentioned in this section, are again binary matroids. 

Let $G$ be a graph with the edge set $E=E(G)$. Attached to $G$ is the matroid $M(G)$ on the ground set $E$, whose circuits are exactly the edge sets of cycles of $G$. The matroid $M(G)$ is called the \emph{cycle matroid} or \emph{polygon matroid} of $G$. 
In particular, a loop and a parallel class in $M(G)$, respectively, correspond to a loop and a maximal set of pairwise parallel (or multiple) edges in $G$, respectively. Thus, $M(G)$ is a simple matroid if and only if $G$ is a simple graph, i.e.~a graph with no loops and no parallel edges. For a graph $G$ with at least three vertices and no isolated vertices and loops, one has that $M(G)$ is connected if and only if $G$ is a $2$-connected graph; see, e.g., \cite[Proposition~4.1.8]{Ox}.   

Let $e\in E$ which is not a loop of $G$. Then the graph $G\setminus e$ denotes the graph on the same vertex set as $G$ obtained by deleting the edge $e$ from $G$. Moreover, the graph $G/e$ denotes the graph which is obtained from $G$ by identifying the endpoints of $e$ and deleting $e$. This operation is called the contraction of the edge $e$ in $G$. Then 
\begin{equation}\label{graphic-deletion-contraction}
M(G)\setminus e=M(G\setminus e)~~~~~~~\text{and}~~~~~~~M(G)/e=M(G/e),
\end{equation}   
for any $e\in E$. Note that similar to matroids, if $e$ is a loop in $G$, then $G/e=G\setminus e$. Observe that even if $G$ is a simple graph, then $G/e$ has not to be simple.     

Any matroid isomorphic to $M(G)$ for some graph $G$, is called a \emph{graphic matroid}. It is well-known that graphic matroids are representable over any field, and in particular, they are binary matroids; see, e.g., \cite[Proposition~5.1.2]{Ox}. It follows immediately from the well-known Whitney's theorem that for any graphic matroid $M$, there exists a connected graph $G$ such that $M$ is isomorphic to $M(G)$. 

A \emph{cographic matroid} $M(G)^*$ is the dual of a graphic matroid $M(G)$. This is one way to see that cographic matroids are also binary. Recall that an \emph{edge cut} in a graph $G$ is a set of edges $X$ such that $G\setminus X$ has more connected components than $G$. If $X$ consists of only one edge $e$, then $e$ is called a \emph{bridge} of $G$. Observe that the circuits of $M(G)^*$ are exactly the minimal edge cuts of $G$; see, e.g., \cite[Proposition~2.3.1]{Ox}. A minimal edge cut of a graph is also called a \emph{bond}. The loops of $M(G)^*$ are exactly the bridges of $G$, and $e,f$ are parallel elements of $M(G)^*$ if and only if $\{e,f\}$ is a minimal edge cut. Hence, $M(G)^*$ is simple, or equivalently $M(G)$ is cosimple, if and only if any minimal edge cut of $G$ has at least three elements, and, in particular, any edge of $G$ is contained in a cycle of $G$. 

Throughout this paper, all matroids are assumed to be non-free matroids, and, in particular, they are non-empty matroids.

\section{Cycle polytopes of matroids}\label{cycle polytopes}
   
Let $M$ be a matroid. A \emph{cycle} of $M$ is defined to be a disjoint union of some of its circuits. We denote the set of all cycles of $M$ by $\mathrm{Cyc}(M)$. In particular, $\emptyset\in \mathrm{Cyc}(M)$. 

Attached to $M$ is a polytope $P_{\mathrm{Cyc}}(M)$ in $\mathbb{R}^{E(M)}$, called the \emph{cycle polytope} of $M$, which is defined as the convex hull of the characteristic vectors of cycles of $M$. Here a characteristic vector $\chi_C$ of a cycle $C$ of $M$ is a $0/1$-vector in $\ZZ^{E(M)}$ whose $e^{th}$ coordinate is $1$ if $e\in C$ and $0$ otherwise. 

If $M$ is a binary matroid, then $P_{\mathrm{Cyc}}(M)$ is exactly the convex hull of all $0/1$-vectors $x$ in $\RR^{E(M)}$ such that $A x\equiv 0$ (mod $2$) where $A$ is the representation matrix of $M$ over $\FF_2$. It follows from \cite[Theorem~4.1]{BGr} that 
\[
\dim P_{\mathrm{Cyc}}(M)=\text{the~number~of~coparallel~classes~of}~M.
\]

In the following theorem we determine certain faces of the cycle polytope of a matroid. First, recall that a \emph{face} $F$ of a polytope $P$ is a subset of $P$ which is the intersection of $P$ with a hyperplane $H$ such that $P$ is entirely contained in one of the two half-spaces defined by $H$. The hyperplane $H$ is then called a \emph{supporting hyperplane} of $P$. Moreover, a \emph{morphism} of polytopes $P$ and $Q$ is a map $\varphi\colon P\rightarrow Q$ which can be extended to an affine map 
$\tilde{\varphi}\colon \mathrm{aff}(P)\rightarrow \mathrm{aff}(Q)$. 
If the morphism $\varphi$ is an isomorphism, then the polytopes $P$ and $Q$ are said to be \emph{affinely isomorphic}.

\begin{Theorem}\label{face}
Let $M$ be a matroid and let $M'$ be a matroid obtained from $M$ by 
\begin{enumerate}
	\item [{\em(a)}] a deletion, or
	\item [{\em(b)}] a series contraction, or 
	\item [{\em(c)}] a coloop contraction.
\end{enumerate}
Then $P_{\mathrm{Cyc}}(M')$ is affinely isomorphic to a face of $P_{\mathrm{Cyc}}(M)$. 
In particular, if $M'$ is a series minor of $M$, then $P_{\mathrm{Cyc}}(M')$ is affinely isomorphic to a face of $P_{\mathrm{Cyc}}(M)$. 	
\end{Theorem}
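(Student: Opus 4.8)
The plan is to realize each $P_{\mathrm{Cyc}}(M')$ inside $P_{\mathrm{Cyc}}(M)$ by exhibiting, for the single element $e$ that is removed, a supporting hyperplane whose associated face is carried isomorphically onto $P_{\mathrm{Cyc}}(M')$ by the coordinate projection $\pi_e\colon\mathbb{R}^{E(M)}\to\mathbb{R}^{E(M)-\{e\}}$ that forgets the $e$-th coordinate. Throughout I would use that cycle polytopes are $0/1$-polytopes, so every characteristic vector $\chi_C$ with $C\in\mathrm{Cyc}(M)$ is a vertex of $P_{\mathrm{Cyc}}(M)$, and that the vertices of a face $P_{\mathrm{Cyc}}(M)\cap H$ are exactly those $\chi_C$ lying on $H$. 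Thus in each case it suffices to identify the cycles of $M'$ with a distinguished subset of the cycles of $M$ and to check that $\pi_e$ restricts to an affine isomorphism on the relevant face.

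For case (a), deletion of $e$, I would first observe that $\mathrm{Cyc}(M\setminus e)=\{C\in\mathrm{Cyc}(M): e\notin C\}$, which is immediate since $\mathcal{C}(M\setminus e)$ consists of the circuits of $M$ avoiding $e$. As $0\le x_e\le 1$ on $P_{\mathrm{Cyc}}(M)$, the hyperplane $H=\{x_e=0\}$ is supporting, and the vertices of the face $F=P_{\mathrm{Cyc}}(M)\cap H$ are precisely the $\chi_C$ with $e\notin C$. Since $F\subseteq H$, the projection $\pi_e$ is injective on $F$ and sends it affinely onto $P_{\mathrm{Cyc}}(M\setminus e)$, the desired face.

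The remaining two cases are those in which the face turns out to be the whole polytope, which is legitimate here because $P_{\mathrm{Cyc}}(M)$ lies in a hyperplane and is therefore a face of itself in the sense defined above. Case (c), contraction of a coloop $e$, is the easy one: a coloop lies in no circuit, so every cycle of $M$ avoids $e$, giving $P_{\mathrm{Cyc}}(M)\subseteq\{x_e=0\}$ and $\mathcal{C}(M/e)=\mathcal{C}(M)$; hence $\pi_e$ merely drops an identically-zero coordinate and is an affine isomorphism $P_{\mathrm{Cyc}}(M)\to P_{\mathrm{Cyc}}(M/e)$. The crux is case (b), the series contraction of $e$, where $\{e,f\}$ is a cocircuit. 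Here I would first invoke \eqref{intersection of circuit and cocircit}, that a circuit and a cocircuit never meet in a single element, to conclude that every circuit of $M$ contains both or neither of $e,f$; since a cycle is a \emph{disjoint} union of circuits, it contains at most one circuit meeting $\{e,f\}$, so every cycle $C$ satisfies $\chi_C(e)=\chi_C(f)$ and $P_{\mathrm{Cyc}}(M)\subseteq\{x_e=x_f\}$. The key combinatorial step, which I expect to be the main obstacle, is to determine $\mathcal{C}(M/e)$ exactly: a rank computation shows that for a circuit $C_1\ni e$ (hence $\ni f$) the set $C_1-\{e\}$ is again a circuit of $M/e$, that the circuits of $M$ avoiding $e$ persist unchanged, and that these exhaust the circuits of $M/e$. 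From this I would deduce that $C\mapsto C-\{e\}$ is a bijection $\mathrm{Cyc}(M)\to\mathrm{Cyc}(M/e)$. Finally, because $x_e=x_f$ holds on $P_{\mathrm{Cyc}}(M)$, the map $\pi_e$ is injective on the polytope, so it carries $P_{\mathrm{Cyc}}(M)$ affinely isomorphically onto the convex hull of the vectors $\chi_{C-\{e\}}$, which is $P_{\mathrm{Cyc}}(M/e)$.

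For the final ``in particular'' statement I would argue by induction on the length of a sequence of deletions and series contractions expressing $M'$ as a series minor of $M$, using that each single step is covered by (a) or (b), that a face of a face is again a face, and that affine isomorphisms send faces to faces. Composing the affine isomorphisms produced at the individual steps then exhibits $P_{\mathrm{Cyc}}(M')$ as affinely isomorphic to a face of $P_{\mathrm{Cyc}}(M)$.
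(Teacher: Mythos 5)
Your proposal is correct and follows essentially the same route as the paper: the hyperplane $x_e=0$ for deletion, the observation that $x_e=x_f$ on the whole polytope for a series contraction (so the face is the polytope itself), and the collapse $\mathcal{C}(M/e)=\mathcal{C}(M)=\mathcal{C}(M\setminus e)$ for a coloop, with the projection forgetting the $e$-th coordinate serving as the affine isomorphism in each case. The only cosmetic difference is that the paper writes out the two mutually inverse affine maps explicitly rather than arguing injectivity of the projection.
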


\begin{proof}
Let $E=E(M)$, $e\in E$ and $E'=E-\{e\}$. Then $\RR^{E'}$ is naturally (isomorphic to) a subspace of $\RR^E$. 
In the following, we denote the $f^{th}$ coordinate of a vector $u$ in $\RR^E$ or $\RR^{E'}$ by ${u}_{f}$, for any $f\in E$.  

(a) First assume that $M'=M\setminus e$. Let $H$ be the hyperplane in $\RR^{E}$ defined by $x_{e}=0$, and let
$F:=P_{\mathrm{Cyc}}(M)\cap H$. Then clearly $F$ is a face of $P_{\mathrm{Cyc}}(M)$.

We claim that $P_{\mathrm{Cyc}}(M')$ is affinely isomorphic to $F$. Since zero is an element of $\mathrm{aff}(P_{\mathrm{Cyc}}(M'))$ corresponding to the empty cycle of $M'$, we have that  $\mathrm{aff}(P_{\mathrm{Cyc}}(M'))=
\mathrm{span}(P_{\mathrm{Cyc}}(M'))$ is a subspace of $\RR^E$. 

Next, define 
\begin{eqnarray*}
	\varphi\colon P_{\mathrm{Cyc}}(M') &\rightarrow & F
\end{eqnarray*}
such that for any $w\in P_{\mathrm{Cyc}}(M')$,
\begin{displaymath}
{\varphi(w)}_f= \left \{\begin {array}{ll}
w_f&\mathrm{if}~~~f\neq e,\\
0&\text{if}~~~f=e.
\end{array}\right.
\end{displaymath}
The map $\varphi$ is well-defined, because 
$\mathcal{C}(M')=\{C\subseteq E-\{e\}: C\in \mathcal{C}(M)\}$ and thus for a characteristic vector ${\chi}_{C'}\in \RR^{E'}$ of a cycle $C'$ of $M'$ one knows that $\phi(\chi_{C'})$ is a characteristic vector of a cycle of $M$ which lies in $H$. Clearly, $\varphi$ is the restriction of the affine/linear map 
\begin{eqnarray*}
	\tilde{\varphi}\colon \mathrm{aff}(P_{\mathrm{Cyc}}(M'))&\rightarrow & \mathrm{aff}(F)\subseteq \RR^{E}
\end{eqnarray*}
such that for any $w\in \mathrm{aff}(P_{\mathrm{Cyc}}(M'))$,
\begin{displaymath}
{\tilde{\varphi}(w)}_f= \left \{\begin {array}{ll}
w_f&\mathrm{if}~~~f\neq e,\\
0&\text{if}~~~f=e.
\end{array}\right.
\end{displaymath}
Hence, $\varphi$ is a morphism of the involved polytopes. By analogous reasons, 
\begin{eqnarray*}
	\psi\colon F&\rightarrow & P_{\mathrm{Cyc}}(M')
\end{eqnarray*}
defined as 
\[
{\psi(v)}_f=v_f \quad \text{for any} \quad v\in F \quad \text{and} \quad f\neq e,
\]
is well-defined and is a morphism of the polytopes $F$ and $P_{\mathrm{Cyc}}(M')$ which is the inverse to $\varphi$. This concludes the proof of~(a).   

(b) Next assume that $e$ and $f$ are coparallel for some $f\in E$, namely $\{e,f\}$ is a cocircuit of $M$, and let $M'=M/e$. 
We claim that $P_{\mathrm{Cyc}}(M')$ is affinely isomorphic to $P_{\mathrm{Cyc}}(M)$. 

Note that, since $\{e,f\}$ is a cocircuit of $M$, it follows from \eqref{intersection of circuit and cocircit} that any circuit of $M$ either contains both of $e$ and $f$, or contains none of them. As any cycle of $M$ is a disjoint union of circuits, the same property holds for any cycle of $M$. For $v\in P_{\mathrm{Cyc}}(M)$, this yields  
$v_{e}=v_{f}$. 

We define 
\begin{eqnarray*}
	\varphi \colon P_{\mathrm{Cyc}}(M')&\rightarrow & P_{\mathrm{Cyc}}(M)
\end{eqnarray*}
such that for any $w\in P_{\mathrm{Cyc}}(M')$,
\begin{displaymath}
{\varphi(w)}_{e'}= \left \{\begin {array}{ll}
w_f&\mathrm{if}~~~e'=e,\\
w_{e'}&\text{if}~~~e'\neq e.
\end{array}\right.
\end{displaymath}
It follows from the discussion above and the relation of cycles of $M'$ and $M$ that $\varphi$ is well-defined. Moreover, $\varphi$ is the restriction of the affine/linear map 
\begin{eqnarray*}
\tilde{\varphi}\colon \mathrm{aff}(P_{\mathrm{Cyc}}(M')) &\rightarrow&  \mathrm{aff}(P_{\mathrm{Cyc}}(M))
\end{eqnarray*}
defined as 
\begin{displaymath}
{\tilde{\varphi}(w)}_{e'}= \left \{\begin {array}{ll}
w_f&\mathrm{if}~~~e'=e,\\
w_{e'}&\text{if}~~~e'\neq e,
\end{array}\right.
\end{displaymath}
for any $w\in \mathrm{aff}(P_{\mathrm{Cyc}}(M'))$. Hence, $\varphi$ is a morphism of the involved polytopes. By analogous reasons,  
\begin{eqnarray*}
	\psi \colon P_{\mathrm{Cyc}}(M)&\rightarrow & P_{\mathrm{Cyc}}(M')
\end{eqnarray*}
defined as 
\[
{\psi(v)}_{e'}=v_{e'} \quad \text{for any} \quad v\in P_{\mathrm{Cyc}}(M) \quad \text{and} \quad e'\neq e,
\]
is a well-defined morphism of polytopes which is the inverse to $\varphi$. Hence, $P_{\mathrm{Cyc}}(M')$ is isomorphic to $P_{\mathrm{Cyc}}(M)$. 

(c) Finally, assume that $e$ is a coloop of $M$. Then, by \eqref{intersection of circuit and cocircit}, $e$ is not contained in any circuit of $M$. This implies that 
$\MC(M/e)=\MC(M)=\MC(M\setminus e)$, which yields  
\begin{equation}\label{cycles-coloop}
\mathrm{Cyc}(M/e)=\mathrm{Cyc}(M)=\mathrm{Cyc}(M\setminus e).
\end{equation}  
This implies that 
$P_{\mathrm{Cyc}}(M/e)=P_{\mathrm{Cyc}}(M\setminus e)\subseteq \RR^{E'}$. Thus, by part~(a), it follows that $P_{\mathrm{Cyc}}(M/e)$ is affinely isomorphic to a face of $P_{\mathrm{Cyc}}(M)$. Note that \eqref{cycles-coloop} also implies that the $e$-th coordinate of any vertex (and hence any element) of $P_{\mathrm{Cyc}}(M)$ is equal to zero. Hence, in this case, $P_{\mathrm{Cyc}}(M/e)$ is indeed affinely isomorphic to $P_{\mathrm{Cyc}}(M)$ itself, since by using the notation of part~(a), we have $F=P_{\mathrm{Cyc}}(M)$. 
\end{proof}

In the following we consider two important special cases of cycle polytopes arising from graphs.

\begin{Example}\label{cut polytope}
	{\em 
	Let $G=(V,E)$ be a graph, and let $M=M(G)$.       
	\begin{enumerate}
		\item \textbf{Eulerian subgraph polytopes}: It is a classical fact in graph theory that a graph $H$ is \emph{Eulerian}, (i.e.~all of its vertices have even degree) if and only if its edge set is the disjoint union of the edge sets of some cycles of $H$. It follows that the cycles of $M$ are exactly the edge sets of the Eulerian subgraphs of $G$. Then the cycle polytope of $M$ is indeed the convex hull of the incident vectors of the Eulerian subgraphs of $G$, namely the vectors $\delta_H\in \RR^{E}$ with  
		\begin{displaymath}
		\delta_{H,e}= \left \{\begin {array}{ll}
		1&\mathrm{if}~~~e\in E(H),\\
		0&\mathrm{otherwise},
		\end{array}\right.
		\end{displaymath}  
		where $H$ is an Eulerian subgraph of $G$ and $e\in E$. This polytope is also known as the \emph{Eulerian subgraph polytope}; see, e.g., \cite{BGr}. In the following, we denote this polytope by $\mathrm{Euler}(G)$. 
		\\
		\item \textbf{Cut polytopes}: 
		Given a subset $A$ of $V$, the \emph{cut set} $\mathrm{Cut}(A)$ of $G$ is a subset of $E$ consisting of those edges of $G$ which have exactly one endpoint in $A$. The \emph{cut polytope} of $G$, which is denoted by ${\mathrm{Cut}}^{\square}(G)$, is the convex hull of the \emph{cut vectors} $\delta_{A}\in \mathbb{R}^{E}$ of $G$, which are defined as
		\begin{displaymath}
		\delta_{A,e}= \left \{\begin {array}{ll}
		1&\mathrm{if}~~~e\in \mathrm{Cut}(A),\\
		0&\mathrm{otherwise},
		\end{array}\right.
		\end{displaymath}
		for any $A\subseteq V$ and $e\in E$. The cut polytope of $G$ has been intensively studied by many authors; see, e.g., \cite{CKNR, D, DD, DL2, O1, O2, RS, SS}. 
		
		It is clear that any cut set $\mathrm{Cut}(A)$ is an edge cut (in the sense of Section~\ref{matroids}) if $A\neq \emptyset, V$. The converse is not true in general, but one can see that any minimal edge cut is a cut set. Hence, minimal edge cuts and minimal cut sets of $G$ coincide. Then, it follows from \cite[Exercises~4.1.27~and~4.1.28]{W} that a subset $C$ of $E$ is a disjoint union of minimal edge cuts (i.e.~a cycle of $M^*$) if and only if $C=\mathrm{Cut}(A)$ for some $\emptyset\neq A\subset V$. Note that the zero vector corresponds to $A=\emptyset$ and the empty cycle. Therefore, in this case, the cycle polytope of $M^*$ is exactly the cut polytope of $G$.  
	\end{enumerate}
}
 \end{Example}

\section{Cycle algebra of a matroid and its algebra retracts}\label{algebra retracts}

Given a field $\KK$, associated to any lattice polytope $P\subseteq \RR^d$ is a toric algebra $\KK[P]$ whose generators bijectively correspond to the lattice points of $P$, namely, 
\[
\KK[P]=\KK[\mathbf{y^a}z:\mathbf{a}\in P\cap \ZZ^d]. 
\] 
If $P\subseteq \RR^d_{\geq 0}$, then the algebra $\KK[P]$ is a $\KK$-subalgebra of the polynomial ring $\KK[y_1,\ldots,y_d,z]$ where 
$\mathbf{y^a}=y_1^{a_1}\cdots y_d^{a_d}$ with $\mathbf{a}=(a_1,\ldots,a_d)\in \ZZ^d$. The toric algebra $\KK[P]$ is naturally a standard graded $\KK$-algebra (generated in degree~1) induced by setting $\deg (z)=1$ and $\deg (y_i)=0$ for all $i=1,\ldots ,d$.     

\medskip
Now, let $M$ be a matroid. We define the \emph{cycle algebra} of $M$ to be the toric algebra $\KK[P_{\mathrm{Cyc}}(M)]$, and to simplify the notation, we denote it by $\KK[\mathrm{Cyc}(M)]$. More precisely, 
\[
\KK[\mathrm{Cyc}(M)]:=\KK[\mathbf{y}^{C}z:C\in \text{Cyc}(M)]
\]
is a $\KK$-subalgebra of the polynomial ring 
$R_M=\KK[y_e, z: e\in E(M)]$, where the monomial $\mathbf{y}^C=\prod_{e\in C} y_e$ corresponds to the characteristic vector $\chi_C$ of $C$. Let $S_M=\KK[x_C: C\in \mathrm{Cyc}(M)]$. One gets a representation for this toric ring by the following homogeneous $\KK$-algebra homomorphism:
\begin{eqnarray*}
	\phi_{M}\colon S_M &\longrightarrow& \KK[\mathrm{Cyc}(M)]
	\\
	x_C &\mapsto& \mathbf{y}^{C}z,
\end{eqnarray*}
for any $C\in \mathrm{Cyc}(M)$. The defining ideal of this algebra is denoted by $I_{\mathrm{Cyc}(M)}$ and we call it the \emph{cycle ideal} of $M$. The cycle ideal is a graded ideal in $S_M$ generated by pure homogeneous binomials of the form~$\prod_{i=1}^dx_{C_i}-\prod_{i=1}^dx_{D_i}$ with $C_i, D_i\in \mathrm{Cyc}(M)$ and $d\geq 1$.  

\begin{Example}\label{cut ideal}
	{\em 
		Let $G$ be a graph.       
		\begin{enumerate}
			\item \textbf{Eulerian algebras}: Let $M=M(G)$. Then, we refer the associated cycle algebra and the cycle ideal, as the \emph{Eulerian algebra} and the \emph{Eulerian ideal} of $G$, respectively, which are denoted by $\KK[\mathrm{Euler}(G)]$ and $I_{\mathrm{Euler}(G)}$, respectively. To the best of our knowledge, the Eulerian algebra has not been studied in the literature before. 
			\\
			\item \textbf{Cut algebras}: Let $M=M(G)^*$. If $G$ is connected, then the corresponding cycle algebra and ideal is naturally identified with the \emph{cut algebra} and \emph{cut ideal} of $G$, introduced in \cite{SS}, (see also \cite{RS}). If $G$ is disconnected, then the cycle algebra of $M$ is naturally isomorphic to the cut algebra of $G$, but cycle ideals could differ by linear forms (see \cite[Proposition~3.2]{RS} and Lemma~\ref{mu}). We denote the cut algebra and ideal of $G$ by $\KK[\mathrm{Cut}(G)]$ and $I_{\mathrm{Cut}(G)}$, respectively. 
		\end{enumerate}
	}
\end{Example} 

The main goal of this section is to provide useful algebra retracts of cycle algebras of matroids generalizing corresponding results of \cite{RS}. First let us recall the well-known definition of an algebra retract of a graded algebra. Here, if it is not stated otherwise, by ``graded" we mean standard $\ZZ$-graded.

\begin{Definition}\label{retract-def}
	{\em Let $A$ and $B$ be graded $\KK$-algebras and let
		$\iota\colon A\rightarrow B$ be an (injective) homogeneous $\KK$-algebra homomorphism. Then $A$ is called an \emph{algebra retract} of $B$, if there exists a homogeneous (surjective) homomorphism of $\KK$-algebras
		$\gamma\colon B\rightarrow A$ such that $\gamma \circ \iota=\mathrm{id}_{A}$.	}
\end{Definition}

It is clear from the above definition that if $A$, $B$ and $C$ are graded $\KK$-algebras such that $A$ is an algebra retract of $B$, and $B$ is an algebra of $C$, then $A$ is an algebra retract of $C$. Note that in this paper the homogeneous homomorphisms are not necessarily of degree~$0$, and also we do not consider more general types of algebra retracts, namely, arbitrary (non-graded) ones.

The highest degree of an element of a minimal homogeneous generating set, the projective dimension~$\projdim_A(I)$ and the Castelnuovo-Mumford regularity~$\reg_A(I)$, and more generally, the graded Betti numbers~$\beta_{i,j}^A(I)$ of a defining ideal~$I$ of a graded $\KK$-algebra~$A/I$ where $A$ is a polynomial ring over a field $\KK$  do not increase by retraction. A precise statement is: 

\begin{Proposition}\label{Betti}
	{\em (}\cite[Corollary~2.5]{OHH}{\em )}
	Let $R=A/I$ and $S=B/J$ be graded $\KK$-algebras where $A$ and $B$ are polynomial rings over a field $\KK$. Suppose that $R$ is an algebra retract of $S$, and $I$ and $J$ are graded ideals containing no linear forms. Then
	\begin{enumerate}\label{betti-retract}
		\item [{\em (a)}] $\beta_{i,j}^A(I)\leq \beta_{i,j}^B(J)$ for all $i,j\in \ZZ_{\geq 0}$.
		\item [{\em (b)}] $\projdim_A(I)\leq \projdim_B(J)$.
		\item [{\em (c)}] $\reg_A(I)\leq \reg_B(J)$.
	\end{enumerate}
\end{Proposition}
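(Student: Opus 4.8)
\section*{Proof proposal}

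The plan is to deduce everything from part~(a). Indeed, once (a) is known, parts (b) and (c) are formal: one has $\projdim_A(I)=\max\{i:\beta_{i,j}^A(I)\neq 0\text{ for some }j\}$ and $\reg_A(I)=\max\{j-i:\beta_{i,j}^A(I)\neq 0\}$, so the termwise inequalities of (a) immediately force the two numerical inequalities. Moreover, using the standard homological shift $\beta_{i,j}^A(I)=\beta_{i+1,j}^A(A/I)=\beta_{i+1,j}^A(R)$ (and likewise for $J$ and $S$), part~(a) is equivalent to $\beta_{i,j}^A(R)\leq \beta_{i,j}^B(S)$ for all $i,j$. Writing $\beta_{i,j}^A(R)=\dim_\KK\Tor_i^A(R,\KK)_j$, it therefore suffices to exhibit, for all $i$ and $j$, the graded $\KK$-vector space $\Tor_i^A(R,\KK)_j$ as a direct summand of $\Tor_i^B(S,\KK)_j$.

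The key step is to lift the algebra retract to the polynomial rings, and here the hypothesis that $I$ and $J$ contain no linear forms is essential: it gives $I_1=J_1=0$, hence graded isomorphisms $A_1\cong R_1$ and $B_1\cong S_1$ in degree~$1$. Using these, I would lift $\iota$ to the unique graded $\KK$-algebra homomorphism $\tilde\iota\colon A\to B$ whose degree-$1$ part is $A_1\cong R_1\xrightarrow{\iota}S_1\cong B_1$, and lift $\gamma$ to $\tilde\gamma\colon B\to A$ analogously. By construction each square formed with the defining surjections $A\to R$ and $B\to S$ commutes, whence $\tilde\iota(I)\subseteq J$ and $\tilde\gamma(J)\subseteq I$. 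Crucially, $\tilde\gamma\circ\tilde\iota$ is a graded algebra endomorphism of a polynomial ring whose restriction to $A_1$ is $\gamma\iota=\id_{R_1}=\id_{A_1}$, and such an endomorphism is determined by its action in degree~$1$; therefore $\tilde\gamma\circ\tilde\iota=\id_A$ holds genuinely. Thus the abstract retract lifts to an honest retract of polynomial rings that is compatible with the ideals.

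With this in hand I would invoke the change-of-rings functoriality of $\Tor(-,\KK)$. Since $\tilde\iota$ is graded it carries $\mm_A$ into $\mm_B$, so the residue fields are matched and the pair $(\tilde\iota,\iota)$ induces a degree-preserving map $\theta_i\colon\Tor_i^A(R,\KK)\to\Tor_i^B(S,\KK)$; symmetrically $(\tilde\gamma,\gamma)$ induces $\eta_i\colon\Tor_i^B(S,\KK)\to\Tor_i^A(R,\KK)$. By functoriality the composite $\eta_i\circ\theta_i$ is the map induced by the composed data $(\tilde\gamma\tilde\iota,\gamma\iota)=(\id_A,\id_R)$, hence is the identity on $\Tor_i^A(R,\KK)$. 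Consequently $\theta_i$ is a split monomorphism of graded $\KK$-vector spaces, yielding $\dim_\KK\Tor_i^A(R,\KK)_j\leq\dim_\KK\Tor_i^B(S,\KK)_j$ in every bidegree, which is exactly (a).

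I expect the main obstacle to be the second step rather than the formal homological algebra. Constructing the polynomial-level retract uses the no-linear-forms hypothesis precisely so that the lifts are linear in degree~$1$ and the composite becomes the literal identity $\id_A$; without it one only obtains $\tilde\gamma\tilde\iota\equiv\id_A\pmod I$, which does not suffice. A secondary technical point is the careful setup of the change-of-rings maps on $\Tor$: one must check that $\iota$ and $\gamma$ are semilinear over $\tilde\iota$ and $\tilde\gamma$ respectively and that the residue-field contributions are tracked consistently, so that $\theta_i$ and $\eta_i$ are genuinely functorial and degree-preserving.
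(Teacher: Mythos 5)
The paper offers no proof of this proposition; it is quoted directly from \cite[Corollary~2.5]{OHH}. Your argument is correct and is essentially the standard one underlying that citation: the no-linear-forms hypothesis lets you lift the retract to a genuine retract $\tilde\gamma\circ\tilde\iota=\id_A$ of the polynomial rings compatible with $I$ and $J$, and functoriality of $\Tor(-,\KK)$ then exhibits $\Tor_i^A(R,\KK)_j$ as a direct summand of $\Tor_i^B(S,\KK)_j$, from which (a) follows and (b), (c) are formal consequences. The only implicit assumption worth flagging is that the retract maps are homogeneous of degree zero, which is needed for the bigraded comparison to make sense and holds in all of the paper's applications.
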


Observe that in Section~\ref{highest degree} we will verify that cycle ideals of matroids contain no linear forms, and hence the above proposition is applicable to them.

\medskip
We divide this section into two parts in which we discuss different types of algebra retracts.
First, in Proposition~\ref{face retract}, we consider a class of face retracts of cycle algebras of matroids. The following is indeed a consequence of Theorem~\ref{face}. 

\begin{Proposition}\label{face retract}
	Let $M$ and $M'$ be two matroids such that $M'$ is obtained by a sequence of series minors and coloop contractions from $M$. Then $\KK[\mathrm{Cyc}(M')]$ is an algebra retract of $\KK[\mathrm{Cyc}(M)]$.		
\end{Proposition}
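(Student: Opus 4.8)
The plan is to reduce to a single elementary operation and then to show that passing to a face of a cycle polytope induces an algebra retract on the level of cycle algebras. Since $M'$ is obtained from $M$ by a sequence of series minors and coloop contractions, and each series minor is by definition a sequence of deletions and series contractions, it suffices (using the transitivity of algebra retracts noted after Definition~\ref{retract-def}) to treat the case where $M'$ arises from $M$ by a single deletion, series contraction, or coloop contraction. In each of these three cases Theorem~\ref{face} provides an affine isomorphism from $P_{\mathrm{Cyc}}(M')$ onto a face $F$ of $P:=P_{\mathrm{Cyc}}(M)$. Inspecting the maps $\varphi,\psi$ constructed there, this affine isomorphism is given by coordinate inclusion or coordinate duplication, so it is linear and sends lattice points bijectively to lattice points; hence it induces an isomorphism of graded $\KK$-algebras $\KK[\mathrm{Cyc}(M')]\cong\KK[F]$. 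Since an isomorphism composed with a retract is again a retract, the whole statement comes down to the general fact: if $F$ is a face of a lattice polytope $P$, then $\KK[F]$ is an algebra retract of $\KK[P]$.

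To prove this fact I would argue on generators. Since every lattice point of $F$ is a lattice point of $P$, the subalgebra $\KK[F]$ is generated by a subset of the generators $\mathbf{y}^{a}z$ ($a\in P\cap\ZZ^d$) of $\KK[P]$, and the inclusion $\iota\colon \KK[F]\hookrightarrow \KK[P]$ is a homogeneous injective $\KK$-algebra homomorphism. For the retraction, choose a supporting hyperplane cutting out $F$, given by a linear functional $c$ and a value $c_0$ with $\langle c,x\rangle\geq c_0$ for all $x\in P$ and $F=\{x\in P:\langle c,x\rangle=c_0\}$. Assign to a monomial $\mathbf{y}^{b}z^{k}\in\KK[P]$ the weight $\langle c,b\rangle-k c_0$. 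This weight is nonnegative on all generators (with value $0$ exactly on those coming from $F$), it is additive under multiplication, and it depends only on the monomial and not on how it factors into generators; hence it defines a $\ZZ_{\geq 0}$-grading on $\KK[P]$ whose weight-$0$ component is precisely $\KK[F]$.

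With this extra grading in hand, let $\gamma\colon \KK[P]\to\KK[F]$ be the projection onto the weight-$0$ component. Because all weights are nonnegative, the only way to obtain weight $0$ in a product is to multiply two weight-$0$ elements, so $\gamma$ is a $\KK$-algebra homomorphism; on generators it is the identity on those coming from $F$ and sends the remaining ones to $0$. In particular $\gamma$ preserves the standard $z$-degree, so it is homogeneous, and $\gamma\circ\iota=\mathrm{id}_{\KK[F]}$. This exhibits $\KK[F]$ as an algebra retract of $\KK[P]$, and combining the single-step retracts along the given sequence yields that $\KK[\mathrm{Cyc}(M')]$ is an algebra retract of $\KK[\mathrm{Cyc}(M)]$.

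The hard part, and the only place needing genuine care, will be the well-definedness of $\gamma$ as a ring homomorphism: naively setting the off-face generators to $0$ need not respect the binomial relations of the cycle ideal. The weight-grading argument is exactly what circumvents this, since it produces $\gamma$ as an intrinsic grading projection rather than as an ad hoc assignment on generators. The one subtlety to check is that the weight is well defined on the monomials of $\KK[P]$ independently of their factorization into the $\mathbf{y}^{a}z$; this holds because the weight is the restriction of a single fixed linear functional on the exponent lattice of the ambient polynomial ring $R_M$.
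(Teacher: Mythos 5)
Your proof is correct and follows the same route as the paper: reduce via Theorem~\ref{face} to the general fact that $\KK[F]$ is an algebra retract of $\KK[P]$ for a face $F$ of a lattice polytope $P$. The only difference is that the paper simply cites this fact from \cite[Corollary~4.34]{BG}, whereas you supply the standard supporting-hyperplane/weight-grading proof of it, which is a legitimate (and essentially the textbook) argument.
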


\begin{proof}
	In general, for a lattice polytope
	$P\subseteq \RR^d_{\geq 0}$
	and a face $F$ of $P$, one has that
	$\KK[F]$ is an algebra retract of $\KK[P]$
	(see, e.g., \cite[Corollary 4.34]{BG}). This fact and applying Theorem~\ref{face} conclude the proof.
\end{proof}

\begin{Remark}\label{contraction-face}
{\em 	Note that in spite of deletions, contractions (and hence arbitrary minors), in general, do not necessarily provide a face. For example, if $M$ is the cographic matroid of the $4$-cycle $C_4$ and $e$ is an edge of $C_4$,  then $P_{\mathrm{Cyc}}(M/e)\iso P_{\mathrm{Cyc}}(M(P_4)^*)\iso \mathrm{Cut}^{\square}(P_4)$ is not affinely isomorphic to any faces of $P_{\mathrm{Cyc}}(M)$. Indeed, this example even shows that contraction may not provide an algebra retract at all; see \cite[Remark~4.4]{RS}. }
\end{Remark}

Next, we provide other algebra retracts which are not necessarily induced by faces of cycle polytopes. The following definition is motivated by the proof of \cite[Theorem~5.4]{RS}:

\begin{Definition}\label{matroidal retract-def}
	Let $M$ and $M'$ be two matroids with $E(M')\subseteq E(M)$. Suppose that there exist two maps $\lambda \colon \mathrm{Cyc}(M')\rightarrow \mathrm{Cyc}(M)$ and $\pi \colon \mathrm{Cyc}(M)\rightarrow \mathrm{Cyc}(M')$ which satisfy the following conditions: 
	\begin{enumerate}
		\item[{\em(a)}] $\pi \circ \lambda=\mathrm{id}_{\mathrm{Cyc}(M')}$. 
		\item[{\em(b)}] If $\sum_{i=1}^d \chi_{C'_i}=\sum_{i=1}^d \chi_{D'_i}$ for some $C'_i,D'_i\in \mathrm{Cyc}(M')$ with $i=1,\ldots,d$ and $d\geq 1$, then 
		$\sum_{i=1}^d \chi_{\lambda(C'_i)}=\sum_{i=1}^d \chi_{\lambda(D'_i)}$.	
		\item[{\em(c)}] If $\sum_{i=1}^d \chi_{C_i}=\sum_{i=1}^d \chi_{D_i}$ for some $C_i,D_i\in \mathrm{Cyc}(M)$ with $i=1,\ldots,d$ and $d\geq 1$, then 
		$\sum_{i=1}^d \chi_{\pi(C_i)}=\sum_{i=1}^d \chi_{\pi(D_i)}$.
	\end{enumerate} 
	Then we say that $M'$ is a \textbf{matroidal retract} of $M$. 
\end{Definition}

\begin{Remark}\label{condition (iii)}
{\em	Using the notation in Definition~\ref{matroidal retract-def}, we would like to observe that a natural example of a map $\pi$ which satisfies condition~(c), is:
	\begin{equation}
      \pi(C)=C\cap E(M'), \quad \text{for any}~~C\in \mathrm{Cyc}(M).
	\end{equation} 
}
\end{Remark}

The following theorem allows us to build up certain algebra retracts for the cycle algebra of a matroid which arise from matroidal retracts.  

\begin{Theorem}\label{matroidal retract-theorem}
	Let $M$ be a matroid and let $M'$ be a matroidal retract of $M$.  
	Then $\KK[\mathrm{Cyc}(M')]$ is an algebra retract of $\KK[\mathrm{Cyc}(M)]$. 
\end{Theorem}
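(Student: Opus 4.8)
The plan is to lift the combinatorial maps $\lambda$ and $\pi$ to homomorphisms of the toric presentations and then show they descend to the cycle algebras. Recall that $\KK[\mathrm{Cyc}(M)]=S_M/I_{\mathrm{Cyc}(M)}$ via $\phi_M$, and likewise $\KK[\mathrm{Cyc}(M')]=S_{M'}/I_{\mathrm{Cyc}(M')}$ via $\phi_{M'}$, where $S_M=\KK[x_C:C\in\mathrm{Cyc}(M)]$ and $S_{M'}=\KK[x_{C'}:C'\in\mathrm{Cyc}(M')]$ are polynomial rings with all variables in degree~$1$. Since the $x_{C'}$ are free generators, I can define a homogeneous $\KK$-algebra homomorphism $\Lambda\colon S_{M'}\to S_M$ by $x_{C'}\mapsto x_{\lambda(C')}$, and similarly $\Pi\colon S_M\to S_{M'}$ by $x_C\mapsto x_{\pi(C)}$; both preserve degree since they send variables to variables.

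The key step is to show that these maps carry cycle ideals into cycle ideals, so that they induce maps on the quotients. By the binomial description of the cycle ideal recalled above, $I_{\mathrm{Cyc}(M')}$ is generated by the binomials $\prod_{i=1}^d x_{C'_i}-\prod_{i=1}^d x_{D'_i}$ with $\sum_{i=1}^d\chi_{C'_i}=\sum_{i=1}^d\chi_{D'_i}$, so it suffices to check the containment on such generators. Applying $\Lambda$ sends this binomial to $\prod_{i=1}^d x_{\lambda(C'_i)}-\prod_{i=1}^d x_{\lambda(D'_i)}$, and condition~(b) of Definition~\ref{matroidal retract-def} gives exactly $\sum_{i=1}^d\chi_{\lambda(C'_i)}=\sum_{i=1}^d\chi_{\lambda(D'_i)}$, so the image again lies in $I_{\mathrm{Cyc}(M)}$. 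Thus $\Lambda(I_{\mathrm{Cyc}(M')})\subseteq I_{\mathrm{Cyc}(M)}$ and $\Lambda$ descends to a homogeneous homomorphism $\iota\colon\KK[\mathrm{Cyc}(M')]\to\KK[\mathrm{Cyc}(M)]$ with $\iota(\mathbf{y}^{C'}z)=\mathbf{y}^{\lambda(C')}z$. The identical argument with condition~(c) in place of (b) yields a homogeneous $\gamma\colon\KK[\mathrm{Cyc}(M)]\to\KK[\mathrm{Cyc}(M')]$ with $\gamma(\mathbf{y}^{C}z)=\mathbf{y}^{\pi(C)}z$.

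It then remains to verify the retract identity $\gamma\circ\iota=\mathrm{id}$. Since both composites are $\KK$-algebra homomorphisms, it is enough to check it on the algebra generators $\mathbf{y}^{C'}z$, and there $\gamma(\iota(\mathbf{y}^{C'}z))=\gamma(\mathbf{y}^{\lambda(C')}z)=\mathbf{y}^{\pi(\lambda(C'))}z=\mathbf{y}^{C'}z$ by condition~(a). Finally, the injectivity of $\iota$ and the surjectivity of $\gamma$ required by Definition~\ref{retract-def} are automatic from $\gamma\circ\iota=\mathrm{id}_{\KK[\mathrm{Cyc}(M')]}$: if $\iota(a)=\iota(b)$ then $a=\gamma(\iota(a))=\gamma(\iota(b))=b$, and every $a$ equals $\gamma(\iota(a))$ and hence lies in the image of $\gamma$. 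This establishes that $\KK[\mathrm{Cyc}(M')]$ is an algebra retract of $\KK[\mathrm{Cyc}(M)]$.

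The step I expect to be the real heart of the argument is the descent to the quotients, i.e.\ showing that $\Lambda$ and $\Pi$ respect the cycle ideals; everything hinges on matching the binomial generators of these toric ideals with the hypotheses (b) and (c), which is precisely what Definition~\ref{matroidal retract-def} was engineered to provide. Once that is in place the remaining verifications are purely formal, and no serious obstacle remains.
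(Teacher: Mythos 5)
Your proposal is correct and follows essentially the same route as the paper's own proof: lift $\lambda$ and $\pi$ to the polynomial rings $S_{M'}$ and $S_M$, use the binomial description of the toric ideals together with conditions (b) and (c) of Definition~\ref{matroidal retract-def} to show the lifted maps respect the cycle ideals, and then verify $\gamma\circ\iota=\mathrm{id}$ via condition (a). The only cosmetic difference is that you check the retract identity on the algebra generators $\mathbf{y}^{C'}z$ while the paper checks it on residue classes $f+I_{\mathrm{Cyc}(M')}$; these are equivalent.
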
 

\begin{proof}
Let $\lambda$ and $\pi$ be two maps with the desired properties of Definition~\ref{matroidal retract-def}. First we define the homomorphism  
\begin{eqnarray*}
	\tilde{\lambda}\colon S_{M'}&\longrightarrow& S_M
	\\
	x_C&\mapsto& x_{\lambda(C)}.
\end{eqnarray*}
Let $\iota$ be the induced map from $\tilde{\lambda}$, defined as follows:
\begin{eqnarray*}
	\iota \colon S_{M'}/I_{\mathrm{Cyc}(M')}&\longrightarrow& S_M/I_{\mathrm{Cyc}(M)}
	\\
	f+I_{\mathrm{Cyc}(M')}&\mapsto& \tilde{\lambda}(f)+I_{\mathrm{Cyc}(M)},
\end{eqnarray*}
for any $f\in S_{M'}$. To check that $\iota$ is a well-defined map, it is enough to show that $\tilde{\lambda}(I_{\mathrm{Cyc}(M')})\subseteq I_{\mathrm{Cyc}(M)}$. Let $f=\prod_{i=1}^d x_{C'_i}-\prod_{i=1}^d x_{D'_i}$ be an element of a minimal homogeneous generating set of $I_{\mathrm{Cyc}(M')}$ for some $d\geq 1$, where $C'_i$'s and $D'_i$'s are cycles of $M'$. Then  $\phi_{M'}(f)=0$, since $I_{\mathrm{Cyc}(M')}=\ker \phi_{M'}$. This together with the definition of the map $\phi_{M'}$ implies that $\prod_{i=1}^d{\mathbf{y}}^{C'_i}=\prod_{i=1}^d\mathbf{y}^{D'_i}$, or equivalently  
\begin{equation}\label{well-defined1}
\sum_{i=1}^d \chi_{C'_i}=\sum_{i=1}^d \chi_{D'_i}.
\end{equation}
Since $M'$ is a matroidal retract of $M$, Definition~\ref{matroidal retract-def}~(b) and (\ref{well-defined1}), yield 
$\sum_{i=1}^d \chi_{\lambda(C'_i)}=\sum_{i=1}^d \chi_{\lambda(D'_i)}$, 
and then  
\[
\prod_{i=1}^d{\mathbf{y}}^{\lambda(C'_i)}=\prod_{i=1}^d\mathbf{y}^{\lambda(D'_i)}.
\]
The latter equality implies that $\phi_M(\tilde{\lambda}(f))=0$. Hence $\tilde{\lambda}(f)\in I_{\mathrm{Cyc}(M)}$, as desired.

Next, we define the homomorphism  
\begin{eqnarray*}
	\tilde{\pi}\colon S_{M}&\longrightarrow& S_{M'}
	\\
	x_C&\mapsto& x_{\pi(C)}.
\end{eqnarray*}
Let $\gamma$ be the induced map from $\tilde{\pi}$, as follows:
\begin{eqnarray*}
	\gamma \colon S_{M}/I_{\mathrm{Cyc}(M)}&\longrightarrow& S_{M'}/I_{\mathrm{Cyc}(M')}
	\\
	f+I_{\mathrm{Cyc}(M)}&\mapsto& \tilde{\pi}(f)+I_{\mathrm{Cyc}(M')},
\end{eqnarray*}
for any $f\in S_{M}$. To see that $\gamma$ is a well-defined map, it suffices to show that $\tilde{\pi}(I_{\mathrm{Cyc}(M)})\subseteq I_{\mathrm{Cyc}(M')}$. Let $f=\prod_{i=1}^d x_{C_i}-\prod_{i=1}^d x_{D_i}$ be an element of a minimal homogeneous generating set of $I_{\mathrm{Cyc}(M)}$ for some $d\geq 1$, where $C_i$'s and $D_i$'s are cycles of $M$. Then  $\phi_{M}(f)=0$, since $I_{\mathrm{Cyc}(M)}=\ker \phi_{M}$. Thus, $\prod_{i=1}^d{\mathbf{y}}^{C_i}=\prod_{i=1}^d\mathbf{y}^{D_i}$, or equivalently 
\begin{equation}\label{well-defined2}
\sum_{i=1}^d \chi_{C_i}=\sum_{i=1}^d \chi_{D_i}.
\end{equation}
Since $M'$ is a matroidal retract of $M$, it follows from Definition~\ref{matroidal retract-def}~(c) and (\ref{well-defined2}) that   
$\sum_{i=1}^d \chi_{\pi(C_i)}=\sum_{i=1}^d \chi_{\pi(D_i)}$,
and then  
\[
\prod_{i=1}^d{\mathbf{y}}^{\pi(C_i)}=\prod_{i=1}^d\mathbf{y}^{\pi(D_i)}.
\]
Therefore, $\phi_{M'}(\tilde{\pi}(f))=0$, and hence $\tilde{\pi}(f)\in I_{\mathrm{Cyc}(M')}$, as desired.

Finally, we need to show that $\gamma \circ \iota=\mathrm{id}_{S_{M'}/I_{\mathrm{Cyc}(M')}}$. Indeed, for any $f\in S_{M'}$, we have
\[
\gamma \circ \iota(f+I_{\mathrm{Cyc}(M')})=\gamma(\tilde{\lambda} (f)+I_{\mathrm{Cyc}(M)})=\tilde{\pi} (\tilde{\lambda}(f))+I_{\mathrm{Cyc}(M')}=f+I_{\mathrm{Cyc}(M')},
\]   
where the last equality follows from Definition~\ref{matroidal retract-def}~(a), since $M'$ is a matroidal retract of $M$. Hence 
$\KK[\mathrm{Cyc}(M')]$ is an algebra retract of $\KK[\mathrm{Cyc}(M)]$.  
\end{proof} 

In Proposition~\ref{face retract}, deletions and more generally series minors and coloop contractions were discussed to obtain algebra retracts. In the following we consider another type of contractions in binary matroids which are important for us in the sequel.

\begin{Definition}\label{binary matroidal retract-def}
Let $M$ be a binary matroid, and let $E$ and $E'$ be two disjoint subsets of $E(M)$ such that $|E|=|E'|=s$ with $E=\{e_1,\ldots,e_s\}$ and $E'=\{e'_1,\ldots,e'_s\}$ for some $s\geq 1$. Suppose that the following conditions hold:
\begin{enumerate}
	\item[{\em(a)}] $E'\in \mathcal{C}(M)$.
	\item[{\em(b)}] For any $C\in \mathcal{C}(M)$ and $p=0,\ldots,s-1$, one has $C\cap E'=\{e'_{i_1},\ldots,e'_{i_p}\}$ if and only if  either 
	\begin{enumerate}
		\item[{\em(i)}] $C\cap E=\{e_{i_1},\ldots,e_{i_p}\}$, or
		\item[{\em(ii)}] $C\cap E=E-\{e_{i_1},\ldots,e_{i_p}\}$.
	\end{enumerate}
\end{enumerate}
Then we say that the binary matroid $M/E'$ is a \textbf{binary matroidal retract} of $M$. 
\end{Definition}

\begin{Remark}\label{symmetic}
{\em	Observe that condition~(b) in Definition~\ref{binary matroidal retract-def} is symmetric in terms of $E$ and $E'$. Indeed, it is easily seen that condition~(b) is equivalent to the following condition: 
	
	For any $C\in \mathcal{C}(M)$ and $p=0,\ldots,s-1$, one has:
	\begin{enumerate}
		\item[(i)] $C\cap E'=\{e'_{i_1},\ldots,e'_{i_p}\}$ implies that 
		\[
		C\cap E=\{e_{i_1},\ldots,e_{i_p}\}~ \text{or}~E-\{e_{i_1},\ldots,e_{i_p}\},
		\] 
		\item[(ii)] $C\cap E=\{e_{i_1},\ldots,e_{i_p}\}$ implies that 
		\[
		C\cap E'=\{e'_{i_1},\ldots,e'_{i_p}\}~\text{or}~E'-\{e'_{i_1},\ldots,e'_{i_p}\}.
		\]
	\end{enumerate}
}
\end{Remark}

Next, we show that binary matroidal retracts result in algebra retracts in the case of binary matroids. For this purpose, we use the following theorem which determines a nice property of binary matroids.

\begin{Theorem}\label{cycles in binary matroids}
	{\em (}\cite[Corollary~9.3.7]{Ox}{\em )}
	Let $M$ be a binary matroid, let $C\in \mathcal{C}(M)$ and let $e\in E(M)-C$. Then, either $C\in \mathcal{C}(M/e)$ or $C$ is a disjoint union of two circuits of $M/e$. In both cases, $M/e$ has no other circuits contained in $C$.  
\end{Theorem}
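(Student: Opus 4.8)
The plan is to localize the whole question to the minor $N:=(M/e)|C$, whose ground set is $C$ and whose circuits are exactly the circuits of $M/e$ contained in $C$; proving the theorem amounts to determining these. Since minors of binary matroids are binary, $N$ is binary, and the essential tool is the standard description of the \emph{cycle space} of a binary matroid: the set $\mathcal{Z}(N)$ of all cycles of $N$ (disjoint unions of circuits, together with $\emptyset$) is a vector space over $\FF_2$ under symmetric difference, its dimension equals the nullity $|C|-r(N)$, and its circuits are exactly the minimal nonempty members. Closure under $\triangle$ is Theorem~\ref{binary}(d); that the dimension equals the nullity, and that $\mathcal{Z}(M/e)$ is the image of $\mathcal{Z}(M)$ under forgetting the $e$-coordinate, are most transparent from an $\FF_2$-representation, where cycle spaces are null spaces and contraction is coordinate projection. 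I would first dispose of the case where $e$ is a loop of $M$: then $M/e=M\setminus e$ and, as $e\notin C$, the set $C$ remains a circuit, giving the first alternative. So assume $e$ is not a loop, so that $r_{M/e}(X)=r_M(X\cup\{e\})-1$.

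The key computation is the nullity of $C$ in $M/e$. As $C$ is a circuit of $M$ we have $r_M(C)=|C|-1$, and $r_M(C\cup\{e\})$ equals $|C|-1$ or $|C|$ according as $e\in\mathrm{cl}_M(C)$ or not; hence
\[
\dim_{\FF_2}\mathcal{Z}(N)=|C|-r_{M/e}(C)=|C|-r_M(C\cup\{e\})+1\in\{1,2\}.
\]
Moreover $C$ is itself a nonempty member of $\mathcal{Z}(N)$: since $C\in\mathcal{Z}(M)$ and $e\notin C$, its image under forgetting $e$ is $C$ again, so $C\in\mathcal{Z}(M/e)$, and $C\subseteq C$ then puts it in $\mathcal{Z}(N)$. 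Thus $C$ is a nonzero vector of a cycle space of dimension $1$ or $2$.

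It then remains to read off the two cases. If $\dim\mathcal{Z}(N)=1$, then $\mathcal{Z}(N)=\{\emptyset,C\}$, so $C$ is the unique nonempty cycle, hence the unique circuit of $N$; this is the first alternative, with no other circuit inside $C$. If $\dim\mathcal{Z}(N)=2$, then $\mathcal{Z}(N)$ has exactly four elements; pick any circuit $C_1$ of $N$ contained in $C$ (one exists since $C$ is dependent, and it is proper since $C$, having nullity $2$, is not itself a circuit). Then $C_2:=C\,\triangle\,C_1=C\setminus C_1$ is a nonempty cycle disjoint from $C_1$ with $C=C_1\sqcup C_2$, and $\{\emptyset,C_1,C_2,C\}$ exhausts $\mathcal{Z}(N)$. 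The minimal nonempty members are precisely $C_1$ and $C_2$, so these are the only circuits of $M/e$ contained in $C$ and $C$ is their disjoint union, which is the second alternative. The main obstacle is entirely front-loaded: justifying that for the binary minor $N$ the cycle space is an $\FF_2$-space of dimension equal to the nullity with circuits as minimal supports, and that $C$ survives into $\mathcal{Z}(M/e)$. Granting these standard facts about the $\FF_2$-representation, the rest is the rank bookkeeping above together with a four-element enumeration.
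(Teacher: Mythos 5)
The paper does not prove this statement at all: it is imported verbatim as \cite[Corollary~9.3.7]{Ox}, so there is no internal argument to compare against. Your proposal is a correct, self-contained proof. The reduction to $N=(M/e)|C$ is exactly right (the circuits of a restriction are the circuits of $M/e$ contained in $C$, so determining $\mathcal{C}(N)$ is the whole problem), the rank bookkeeping $\dim_{\FF_2}\mathcal{Z}(N)=|C|-r_M(C\cup\{e\})+1\in\{1,2\}$ is correct, and the four-element enumeration in the nullity-$2$ case, together with the observation that the minimal nonempty vectors of the cycle space are precisely the circuits, cleanly yields both alternatives and the ``no other circuits'' clause. The only ingredients you take on faith are the standard $\FF_2$-representation facts --- that the cycle space of a binary matroid is the null space of a representing matrix (hence has dimension equal to the nullity, is closed under $\triangle$, and has the circuits as its minimal nonempty members), and that contraction corresponds to coordinate projection of that null space; these are exactly the contents of Oxley's Section~9.2--9.3 and are no deeper than the corollary being proved, so invoking them is legitimate, though note that Theorem~\ref{binary}(d) alone gives closure of the cycle set under $\triangle$ only for pairs of circuits, and extending to arbitrary cycles (or, as you do, passing directly to the null-space description) is the step that actually needs the representation. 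Your argument is arguably more informative than a bare citation, since it shows the dichotomy is just the statement that a circuit has nullity $1$ or $2$ in the contraction.
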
 

The next fact is also useful in the proof of Theorem~\ref{binary matroidal retract-theorem} below.  

\begin{Remark}\label{circuit of cardinality at least two}
{\em Note that it is clear that if $C$ is a circuit of an arbitrary matroid $M$ with $|C|\geq 2$ and $e\in C$, then $C-\{e\}$ is a circuit of $M/e$, see also \cite[Page~317]{Ox}. }	
\end{Remark}

\begin{Theorem}\label{binary matroidal retract-theorem}
 A binary matroidal retract of a binary matroid is a matroidal retract.  
\end{Theorem}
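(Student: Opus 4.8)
The plan is to exhibit explicitly the two maps $\lambda$ and $\pi$ demanded by Definition~\ref{matroidal retract-def} and to verify its three conditions. Write $E(M')=E(M)-E'$, so that $E\subseteq E(M')\subseteq E(M)$ and the ambient hypothesis of the definition holds. Throughout I would use the characterization from Section~\ref{cycle polytopes} that, $M$ being binary, $\mathrm{Cyc}(M)$ consists precisely of the $0/1$-vectors $x$ with $Ax\equiv 0\pmod 2$; by Theorem~\ref{binary}(d) this set is closed under symmetric difference $\Delta$, hence forms an $\FF_2$-vector space (the cycle space) in which the circuits are a spanning set. The first technical input I record is a \emph{projection lemma}: the cycles of $M/E'$ are exactly the restrictions $C\setminus E'$ of cycles $C$ of $M$. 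This follows by induction on $|E'|$ from Theorem~\ref{cycles in binary matroids} and Remark~\ref{circuit of cardinality at least two} (or, more conceptually, from the kernel description above together with the $\FF_2$-duality between cycle and cocycle spaces, under which contraction becomes coordinate projection). With it in hand I set $\pi(C)=C\cap E(M')=C\setminus E'$, which then maps into $\mathrm{Cyc}(M')$ and satisfies condition~(c) by Remark~\ref{condition (iii)}.

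The heart of the proof is a structural lemma promoting condition~(b) from circuits to all cycles. Identify $\FF_2^{E}$ and $\FF_2^{E'}$ with $\FF_2^{s}$ through the common index set via $\beta\colon e_i\mapsto e_i'$, and consider the assignment $C\mapsto \chi_{C\cap E}+\chi_{C\cap E'}\in\FF_2^{s}$, which is $\FF_2$-linear on the cycle space since $\chi_{A\Delta B}=\chi_A+\chi_B$. Condition~(b) asserts exactly that every circuit $C$ with $C\cap E'\neq E'$ is sent into $\{0,\mathbf 1\}$, the alternatives (i) and (ii) corresponding to the values $0$ and $\mathbf 1$; and since $E'$ is itself a circuit, minimality of circuits forces $E'$ to be the only circuit $C$ with $C\cap E'=E'$, for which the value is again $\mathbf 1$. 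As $\{0,\mathbf 1\}=\FF_2\cdot\mathbf 1$ is a one-dimensional subspace of $\FF_2^{s}$ and the circuits span the cycle space, the image of \emph{every} cycle lies in $\{0,\mathbf 1\}$. Thus each cycle $C$ of $M$ satisfies either $C\cap E'=\beta(C\cap E)$ or $C\cap E'=E'-\beta(C\cap E)$; that is, the $E$- and $E'$-patterns of any cycle are equal or complementary, which is the symmetric reformulation anticipated in Remark~\ref{symmetic}.

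I would then use this to define $\lambda$. Given a cycle $C'$ of $M'$, the projection lemma supplies a lift $\hat C\in\mathrm{Cyc}(M)$ with $\hat C\setminus E'=C'$; since $\hat C\cap E=C'\cap E$, the structural lemma gives $\hat C\cap E'\in\{\beta(C'\cap E),\,E'-\beta(C'\cap E)\}$. Because $E'\in\mathrm{Cyc}(M)$ and $E'\cap E(M')=\emptyset$, replacing $\hat C$ by $\hat C\,\Delta\,E'$ toggles the $E'$-pattern between these two values while leaving the $E(M')$-part equal to $C'$; hence there is a unique cycle, which I define to be $\lambda(C')$, with $\lambda(C')\setminus E'=C'$ and $\lambda(C')\cap E'=\beta(C'\cap E)$. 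Condition~(a) is then immediate, since $\pi(\lambda(C'))=\lambda(C')\setminus E'=C'$. For condition~(b), suppose $\sum_i\chi_{C'_i}=\sum_i\chi_{D'_i}$ over $\ZZ$. On the coordinates in $E(M')$ each $\chi_{\lambda(C'_i)}$ restricts to $\chi_{C'_i}$, so those coordinates already agree; on a coordinate $e'_j\in E'$ one has $(\chi_{\lambda(C'_i)})_{e'_j}=(\chi_{C'_i})_{e_j}$ by construction of $\lambda$, so the $E'$-coordinates of $\sum_i\chi_{\lambda(C'_i)}$ reproduce the $E$-coordinates of $\sum_i\chi_{C'_i}$, and likewise for the $D'_i$. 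The hypothesis forces equality there too, giving $\sum_i\chi_{\lambda(C'_i)}=\sum_i\chi_{\lambda(D'_i)}$, which is condition~(b); hence $M'$ is a matroidal retract of $M$.

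The main obstacle is the structural lemma of the second paragraph: condition~(b) is only postulated for circuits, and the real content is to upgrade it to arbitrary cycles, which is precisely where binarity enters, through the $\FF_2$-linearity of the cycle space and the one-dimensionality of $\{0,\mathbf 1\}$. The accompanying projection lemma (contraction as coordinate projection on the cycle space) is the only other genuinely nontrivial ingredient; once both are established, the construction of $\lambda$ and the verification of conditions~(a)--(c) reduce to the coordinate bookkeeping indicated above.
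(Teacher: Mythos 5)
Your proof is correct, and it constructs the same two maps as the paper's proof: your $\lambda(C')$ is exactly $C'\cup\{e'_{i_j}:e_{i_j}\in C'\}$ and your $\pi$ is $C\mapsto C-E'$, and the verifications of conditions (a)--(c) of Definition~\ref{matroidal retract-def} are the same coordinate bookkeeping. Where you genuinely diverge is in how well-definedness of $\lambda$ is established. The paper reduces to a single circuit $C'=C-E'$ of $M/E'$ and runs a three-way case analysis on $|C'\cap E|$ (namely $q=0$, $q=s$, $1\le q\le s-1$), invoking Definition~\ref{binary matroidal retract-def}~(b) and Theorem~\ref{binary}(d) case by case. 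You instead exploit the fact, recorded in Section~\ref{cycle polytopes}, that for a binary matroid the cycles form the $\FF_2$-kernel of the representation matrix, hence a vector space under symmetric difference spanned by the circuits; your map $C\mapsto\chi_{C\cap E}+\chi_{C\cap E'}$ is linear, sends every circuit into the subspace $\{0,\mathbf 1\}$ (the hypothesis handles circuits with $C\cap E'\ne E'$, and circuit minimality leaves only $E'$ itself otherwise), and therefore sends every cycle there. This buys a cleaner argument, a sharper statement (the matching-or-complementary pattern holds for all cycles, not just circuits, which the paper only obtains implicitly via disjoint unions), and a transparent localization of where binarity is used; the paper's version buys self-containedness, working only with the circuit-level tools (Theorem~\ref{cycles in binary matroids}, Remark~\ref{circuit of cardinality at least two}) and never appealing to the vector-space structure. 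The one place you are terser than you should be is the projection lemma: the inclusion $\{C\setminus E': C\in\mathrm{Cyc}(M)\}\subseteq\mathrm{Cyc}(M/E')$ is exactly the paper's iterated application of Theorem~\ref{cycles in binary matroids} and Remark~\ref{circuit of cardinality at least two}, but the reverse inclusion, which you need to produce the lift $\hat C$, deserves a sentence (each circuit of $M/E'$ is $C-E'$ for some circuit $C$ of $M$ by the definition of contraction, and a disjoint union of such lifts via the symmetric difference of the chosen $C$'s, since intersection with $E(M)-E'$ distributes over $\Delta$). With that sentence added, the argument is complete.
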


\begin{proof}
	Let $M$ be a binary matroid and let $M'$ be a binary matroidal retract of $M$. Then there are disjoint subsets $E$ and $E'$ of $E(M)$ as in Definition~\ref{binary matroidal retract-def} such that $M'=M/E'$.	The goal is to show that maps $\lambda$ and $\pi$ exist with the desired properties of Definition~\ref{matroidal retract-def}. At first, we define the map $\lambda$ as follows:
	\begin{eqnarray*}
		\lambda\colon \mathrm{Cyc}(M/E')&\longrightarrow& \mathrm{Cyc}(M)
		\\
		C'&\mapsto& C'\cup \{e'_{i_j}:e_{i_j}\in C'\}.
	\end{eqnarray*}
    Note that in particular, $\lambda(\emptyset)=\emptyset$. 
    We have to verify that $\lambda$ is well-defined, and for this, it remains to prove that for any non-empty $C'\in \mathrm{Cyc}(M/E')$, one has $\lambda(C')\in \mathrm{Cyc}(M)$. So, let $C'\neq \emptyset$ be a cycle of $M/E'$. Then there exist pairwise disjoint circuits $C_1,\ldots,C_t$ of $M/E'$, for some $t\geq 1$, such that  $C'=\cup_{i=1}^t C_i$. By definition of $\lambda$, it is immediately clear that $\lambda(C_i)$'s are pairwise disjoint as well, and $\lambda(C')=\cup_{i=1}^t\lambda(C_i)$. To see that $\lambda(C')\in \mathrm{Cyc}(M)$, it is enough to show that for each $i=1,\ldots,t$, we have $\lambda(C_i)\in \mathrm{Cyc}(M)$. For simplicity, we may assume right away that $C'\in \mathcal{C}(M/E')$. This implies that $C'=C-E'$ for some $C\in \mathcal{C}(M)$ with $C\neq E'$. Let 
    \[
    C'\cap E=\{e_{j_1},\ldots,e_{j_q}\}
    \] 
    for some $q$ with $0\leq q\leq s$. Then $C\cap E=\{e_{j_1},\ldots,e_{j_q}\}$, since $C'\cap E=C\cap E$. We distinguish three cases: 
    
    \emph{Case}~1. Assume that $q=0$. Then Remark~\ref{symmetic}~(ii)  yields $C\cap E'=\emptyset$, since $C\cap E=C'\cap E=\emptyset$ and since $C$ and $E'$ are two distinct circuits. Hence, $C=C'$. Thus,
    $\lambda(C')=C'=C\in \mathrm{Cyc}(M)$.
    
    \emph{Case}~2. Assume that $q=s$. This implies that $E\subseteq C'$, and hence $\lambda(C')=C'\cup E'$. It follows directly from Definition~\ref{binary matroidal retract-def} that $E'\in \mathcal{C}(M)$ and $C'\cap E'=\emptyset$ by the choice of $C'=C-E'$. By a similar argument as in Case~1, in Definition~\ref{binary matroidal retract-def}~(b) only the case $C\cap E'=\emptyset$ is possible, because $E=C'\cap E=C\cap E$. Thus, $C'=C$ and $\lambda(C')\in \mathrm{Cyc}(M)$.
    
    
    \emph{Case}~3. Assume that $1\leq q\leq s-1$. Then $\lambda(C')=C'\cup \{e'_{j_1},\ldots,e'_{j_q}\}$. By Definition~\ref{binary matroidal retract-def}, we have either 
    $C\cap E'=\{e'_{j_1},\ldots,e'_{j_q}\}$ or 
    $C\cap E'=E'-\{e'_{j_1},\ldots,e'_{j_q}\}$. In the first case we get $\lambda(C')=C$ which is a circuit of $M$, while in the second case one obtains $\lambda(C')=C\Delta E'$ which is a cycle of $M$, by Theorem~\ref{binary}, since $M$ is a binary matroid and $E'\in \mathcal{C}(M)$.         
    
    All together we see that $\lambda$ is indeed well-defined. Related to $\lambda$ it remains to show that condition~(b) in Definition~\ref{matroidal retract-def} holds. Let $C_i, D_i\in \mathrm{Cyc}(M/E')$ for $i=1,\ldots,d$ with $d\geq 1$, and assume that 
    \begin{equation}\label{union of cycles}
     \sum_{i=1}^d \chi_{C_i}=\sum_{i=1}^d \chi_{D_i}. 
    \end{equation}
    The goal is to prove that  
    $\sum_{i=1}^d \chi_{\lambda(C_i)}=\sum_{i=1}^d \chi_{\lambda(D_i)}$. 
    For any $e\in E(M)$, let $m_e$ and $m'_e$ denote the coordinate corresponding to $e$ in $\sum_{i=1}^d \chi_{\lambda(C_i)}$ and $\sum_{i=1}^d \chi_{\lambda(D_i)}$, respectively. Using this notation, it remains to see that $m_e=m'_e$.  Note that if $e\in E(M/E')$, then it is clear by the definition of $\lambda$ that $m_e$ is equal to the $e$-th coordinate in $\sum_{i=1}^d \chi_{C_i}$, and $m'_e$ is equal to the $e$-coordinate of $\sum_{i=1}^d \chi_{D_i}$. Thus, in this case, $m_e=m'_e$, according to (\ref{union of cycles}). Next consider the remaining case $e=e'_j$ in $E'$ for some $j\in \{1,\ldots,s\}$. 

    At first assume that $m_e=0$. So, $e=e'_j\notin \lambda(C_i)$ for all $i=1,\ldots,d$, and then $e_j\notin C_i$ for all $1=1,\ldots,d$. Thus, by (\ref{union of cycles}), we deduce that $e_j\notin D_i$ for all $i$, and hence $e=e'_j\notin \lambda(D_i)$ for all $i=1\ldots,d$. Therefore, it follows that also $m'_{e}=0$ as desired. 

    In the second case assume that $m_e=t$ for some positive integer $t$. Then there exist exactly $t$ different indices   $k_1,\ldots,k_t\in\{1,\ldots,d\}$ such that $e\in \lambda(C_{k_{\ell}})$ for all $\ell=1,\ldots,t$, because the sets $\lambda(C_i)$ are disjoint as observed above. The definition of $\lambda$ and the assumption $e=e'_j$ yield that $C_{k_1},\ldots,C_{k_t}$ are the only cycles among $C_i$'s which contain $e_j$. Thus, the $e_j$-th coordinate of 
    $\sum_{i=1}^d \chi_{C_i}$ is equal to $t$, and hence by (\ref{union of cycles}), the same coordinate of $\sum_{i=1}^d \chi_{D_i}$ equals to $t$. This means that there are exactly $t$ different indices $h_1,\ldots, h_t$ for which $D_{h_1},\ldots,D_{h_t}$ contain $e_j$, and hence $\lambda(D_{h_1}),\ldots,\lambda(D_{h_t})$ are the only ones among $\lambda(D_i)$'s which contain $e=e'_j$. This implies that $m'_e=t$. This concludes the verification of Definition~\ref{matroidal retract-def}~(b).
     
     Continuing the discussion that Definition~\ref{matroidal retract-def} holds, we have to define an appropriate map $\pi$. For this we set:
     \begin{eqnarray*}
     	\pi\colon \mathrm{Cyc}(M)&\longrightarrow& \mathrm{Cyc}(M/E')
     	\\
     	C&\mapsto& C-E'.
     \end{eqnarray*}
     Observe that in particular, $\pi(\emptyset)=\pi(E')=\emptyset$. 
     As a first task we have to see that $\pi$ is well-defined and for this one has to show that for any non-empty cycle $C$ of $M$, $C-E'$ is a cycle of $M/E'$. Note that the cycle $C$ can be written as $C=\cup_{i=1}^tC_i$ for some $t\geq 1$, where $C_i$'s are pairwise disjoint circuits of $M$ for $i=1,\ldots,t$. Hence, $C-E'$ is the disjoint union of the sets $C_1-E',\ldots,C_t-E'$. Thus, it is enough to show that for each $i=1,\ldots,t$, one has  $C_i-E'\in \mathrm{Cyc}(M/E')$. So, without loss of generality, we may assume that $C\in \mathcal{C}(M)$. To prove that $C-E'\in \mathrm{Cyc}(M/E')$, we distinguish two case:
     
     \emph{Case}~1. Assume that $C\cap E'=\emptyset$. Then $C-E'=C$. Since $M$ is a binary matroid, Theorem~\ref{cycles in binary matroids} implies that either $C\in \mathcal{C}(M/e'_1)$ or $C=C_1\cup C_2$ where $C_1,C_2\in \mathcal{C}(M/e'_1)$ and $C_1\cap C_2=\emptyset$. Since $M/e'_1$ is a binary matroid as well, one can apply Theorem~\ref{cycles in binary matroids} to this matroid for contraction with respect to the element $e'_2$,  and each of the probable circuits $C$ or $C_1$ and $C_2$. Then, by repeating this procedure, after a finite number of steps, it follows that $C$ is either a circuit of $M/E'$ or a disjoint union of certain circuits of $M/E'$. Hence $C$ is a cycle of $M/E'$, as desired.   
     
     \emph{Case}~2. Assume that $C\cap E'\neq \emptyset$. If $C=E'$, then the claim is trivially true. So, suppose that $C\neq E'$. Then, $|C|\geq 2$, since $E'$ and $C$ are both circuits and can not contain each other. Moreover, if 
     $C\cap E'=\{e'_{j_1},\ldots,e'_{j_{\ell}}\}$ for some $1\leq \ell <s$, then for any proper subset $T$ of $\{e'_{j_1},\ldots,e'_{j_{\ell}}\}$, one has $|C-T|\geq 2$. Thus, according to Remark~\ref{circuit of cardinality at least two}, we have 
     $C-\{e'_{j_1},\ldots,e'_{j_{\ell}}\}\in \mathcal{C}(M/\{e'_{j_1},\ldots,e'_{j_{\ell}}\})$. Since $M/\{e'_{j_1},\ldots,e'_{j_{\ell}}\}$ is a binary matroid, and $E'\cap (C-\{e'_{j_1},\ldots,e'_{j_{\ell}}\})=\emptyset$, similar to the Case~1, by applying Theorem~\ref{cycles in binary matroids} repeatedly, it follows that $C-E'$ is indeed a cycle of $M/E'$.     
     
     It is clear that for any $C\in \mathrm{Cyc}(M)$, we have  
     $\pi(C)=C\cap E(M/E')$, which by Remark~\ref{condition (iii)} implies that Definition~\ref{matroidal retract-def}~(c) also holds. 
     
     Finally, according to the definitions of the maps $\lambda$ and $\pi$, it is obvious that $\pi \circ \lambda$ is the identity map on the $\mathrm{Cyc}(M/E')$, which verifies condition~(a) in the definition of a matroidal retract. This concludes the proof that $M'=M/E'$ is a matroidal retract of $M$. 
\end{proof}

\begin{Problem}\label{binary matroidal retract-problem}
	The proof of Theorem~\ref{binary matroidal retract-theorem} uses at several places the binary assumption. We leave it as an interesting question to decide whether an analogous statement of Theorem~\ref{binary matroidal retract-theorem} holds or not, if one drops the word "binary" everywhere in Definition~\ref{binary matroidal retract-def}. 
\end{Problem}

Motivated by the main results of this section, we introduce a type of minors arising from deletions and certain contractions:

\begin{Definition}\label{g-series minor-definition}
   Let $M$ and $M'$ be matroids where $M'$ is obtained from $M$ by a sequence of 
   \begin{enumerate}
   	\item [{\em(a)}] deletions, 
   	\item [{\em(b)}] series contractions,
   	\item [{\em(c)}] coloop contractions, and
   	\item [{\em (d)}] binary matroidal retracts.
   \end{enumerate}
Then we call $M'$ a \textbf{generalized series minor} {\em (}or shortly, \textbf{g-series minor}{\em )} of $M$. 
\end{Definition}

\begin{Remark}\label{comparison of minor types}
{\em 
	Observe that for a $g$-series minor, one is allowed to apply two more operations ``coloop contraction" and ``binary matroidal retract" on matroids compared to the case of a series minor. In particular, every series minor is a $g$-series minor. }
\end{Remark}

The next corollary is an immediate consequence of Proposition~\ref{face retract}, Theorem~\ref{matroidal retract-theorem} and Theorem~\ref{binary matroidal retract-theorem}. 

\begin{Corollary}\label{algebra retract-corollary} 
 Let $M$ be a binary matroid and $M'$ be a $g$-series minor of $M$. Then $\KK[\mathrm{Cyc}(M')]$ is an algebra retract of $\KK[\mathrm{Cyc}(M)]$.  
\end{Corollary}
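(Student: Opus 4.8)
The plan is to build the $g$-series minor $M'$ from $M$ by one operation at a time and use transitivity of algebra retracts (noted right after Definition~\ref{retract-def}) to chain the resulting retracts together. Concretely, by Definition~\ref{g-series minor-definition}, there is a sequence of matroids
\[
M=M_0,\; M_1,\; \ldots,\; M_k=M'
\]
in which each $M_{i+1}$ is obtained from $M_i$ by a single deletion, series contraction, coloop contraction, or binary matroidal retract. It suffices to show that at each step $\KK[\mathrm{Cyc}(M_{i+1})]$ is an algebra retract of $\KK[\mathrm{Cyc}(M_i)]$; transitivity then yields that $\KK[\mathrm{Cyc}(M')]$ is an algebra retract of $\KK[\mathrm{Cyc}(M)]$.

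For a single step the assertion follows from the results already established. If $M_{i+1}$ is obtained from $M_i$ by a deletion, a series contraction, or a coloop contraction, then Proposition~\ref{face retract} (applied to this single operation, which is in particular a sequence of series minors and coloop contractions) gives the required retract. If instead $M_{i+1}$ is a binary matroidal retract of $M_i$, then Theorem~\ref{binary matroidal retract-theorem} tells us that $M_{i+1}$ is a matroidal retract of $M_i$, and then Theorem~\ref{matroidal retract-theorem} yields that $\KK[\mathrm{Cyc}(M_{i+1})]$ is an algebra retract of $\KK[\mathrm{Cyc}(M_i)]$. One should check that the binary hypothesis propagates along the sequence so that Theorem~\ref{binary matroidal retract-theorem} applies at every binary-matroidal-retract step: since $M$ is binary and the class of binary matroids is minor-closed (as recorded after Theorem~\ref{binary}), every $M_i$ occurring in the sequence is again binary, and a binary matroidal retract is by definition taken of a binary matroid, so each such step is legitimate.

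There is essentially no hard part here: the corollary is a bookkeeping assembly of Proposition~\ref{face retract}, Theorem~\ref{matroidal retract-theorem}, and Theorem~\ref{binary matroidal retract-theorem} via transitivity, exactly as the sentence preceding the statement advertises. The only point deserving a moment of care is the one just mentioned, namely that we remain inside the class of binary matroids throughout so that the binary-matroidal-retract operations in Definition~\ref{g-series minor-definition}~(d) are always well-posed; this is immediate from minor-closedness of binary matroids. Thus the proof reduces to writing out the one-step reduction together with the transitivity remark.

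\begin{proof}
By Definition~\ref{g-series minor-definition}, there is a sequence of matroids $M=M_0, M_1, \ldots, M_k=M'$ such that each $M_{i+1}$ is obtained from $M_i$ by a single deletion, series contraction, coloop contraction, or binary matroidal retract. Since $M$ is binary and the class of binary matroids is minor-closed, each $M_i$ is binary as well; in particular, every binary-matroidal-retract step in the sequence is taken of a binary matroid. We show that $\KK[\mathrm{Cyc}(M_{i+1})]$ is an algebra retract of $\KK[\mathrm{Cyc}(M_i)]$ for every $i$, whence the claim follows by transitivity of algebra retracts.

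If $M_{i+1}$ is obtained from $M_i$ by a deletion, a series contraction, or a coloop contraction, then this single operation is a sequence of series minors and coloop contractions, so Proposition~\ref{face retract} yields that $\KK[\mathrm{Cyc}(M_{i+1})]$ is an algebra retract of $\KK[\mathrm{Cyc}(M_i)]$. If $M_{i+1}$ is a binary matroidal retract of $M_i$, then by Theorem~\ref{binary matroidal retract-theorem} it is a matroidal retract of $M_i$, and hence Theorem~\ref{matroidal retract-theorem} implies that $\KK[\mathrm{Cyc}(M_{i+1})]$ is an algebra retract of $\KK[\mathrm{Cyc}(M_i)]$. This completes the proof.
\end{proof}
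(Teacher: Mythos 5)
Your proof is correct and follows exactly the route the paper intends: the corollary is stated there as an immediate consequence of Proposition~\ref{face retract}, Theorem~\ref{matroidal retract-theorem} and Theorem~\ref{binary matroidal retract-theorem}, assembled step by step via transitivity of algebra retracts. Your additional check that binarity propagates along the sequence (so that the binary-matroidal-retract steps are well-posed) is a worthwhile point of care that the paper leaves implicit.
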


\section{Generalized series minors of cographic matroids}\label{Cographic case}

In this section, we consider cographic matroids, which are in particular binary, and investigate certain $g$-series minors of them. The main goal here is to verify that Corollary~\ref{algebra retract-corollary} recovers one of the main results from~\cite{RS} (see \cite[Theorem~5.4]{RS}). 

First, let us recall some definitions from \cite{RS}. Let $G=(V,E)$ be a simple graph. Recall that an \emph{induced subgraph} $G_T$ on any non-empty subset $T$ of $V$ is the subgraph of $G$ whose vertex set is $T$ and whose edges are those edges of $G$ which have both endpoints in $T$. 

Next, let $v\in V$. Then
\[
N_G(v)=\{w\in V: w~\mathrm{is~a~neighbor~of}~v~\mathrm{in}~G\},
\]
where a vertex $w\in V$ is said to be a \emph{neighbor} of $v$ in $G$, if it is adjacent to $v$. Furthermore,  
\[
N_G[v]=N_G(v)\cup \{v\} ~~ \mathrm{and}~~ N_G(T)=\cup_{v\in T} N_{G}(v),
\]
for every non-empty subset $T$ of $V$.

Assume that $V=W\cup W'$ with $W\cap W'=\emptyset$ and $W,W'\neq \emptyset$, and let $H=G_{W}$ be the induced subgraph of $G$ on $W$.
Suppose that there exists a vertex $v\in W$ with 
$W\cap N_{G}(W')\subseteq N_{H}[v]$. Then $H$ is said to be a \emph{neighborhood-minor} of $G$. 

Recall that a minor of $G$ is a graph obtained from $G$ by applying a sequence of the operations ``edge deletion" and ``edge contraction", together with disregarding the isolated vertices.  
One can check that a neighborhood minor of a graph is indeed a minor of it, but the converse does not hold in general. For example, by removing an edge from a graph $G$, one obtains a minor, but the obtained graph is not an induced subgraph of $G$ while neighborhood minors are always induced subgraphs by definition. 

We would like to remark that the notion of neighborhood-minors was defined in \cite[Definition~5.2]{RS}. But, as it was mentioned in \cite[Remark~5.3]{RS}, in the special case that $|W'|=1$ it has been previously considered in studying cut polytopes for different purposes; see, e.g., \cite[Theorem~2]{D}. 

\medskip
In the following theorem, we see the relationship between neighborhood-minors of graphs and $g$-series minors of cographic matroids.

\begin{Theorem}\label{neighborhood-g-series minor-corollary}
 Let $G$ be a simple graph and $H$ be a neighborhood-minor of $G$. Then $M(H)^*$ is a g-series minor of $M(G)^*$. 
\end{Theorem}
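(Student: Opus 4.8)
The plan is to translate the graph-theoretic neighborhood-minor construction into a sequence of matroid operations on the cographic matroid, using the duality relations (\ref{dual-deletion-contraction}) and (\ref{graphic-deletion-contraction}) to pass between graphic and cographic sides. Since $H = G_W$ is an induced subgraph of $G$ on the vertex set $W$, the edges of $G$ that are \emph{not} in $H$ fall into two types: edges with both endpoints in $W'$, and edges with exactly one endpoint in $W$ and one in $W'$ (the ``crossing'' edges). First I would observe that on the graphic side, forming the induced subgraph $H$ from $G$ is achieved by deleting all edges of $G$ not in $E(H)$; dually, by (\ref{dual-deletion-contraction}) a deletion in $M(G)$ corresponds to a contraction in $M(G)^*$. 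So $M(H)^* = (M(G)\setminus (E(G)-E(H)))^* = M(G)^*/(E(G)-E(H))$. The task is therefore to realize this single contraction in $M(G)^*$ as a g-series minor, i.e.\ to decompose $E(G)-E(H)$ into pieces each of which is handled by a deletion, a series contraction, a coloop contraction, or a binary matroidal retract (Definition~\ref{g-series minor-definition}).

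Next I would separate the two types of non-$H$ edges. The edges internal to $W'$ should be dealt with first, and I expect these correspond to contractions in $M(G)^*$ that are either series contractions or coloop contractions (equivalently, straightforward operations), since contracting an internal $W'$-edge in the cographic matroid dualizes to deleting it in $M(G)$, and repeated use of parts (a)--(c) of Definition~\ref{g-series minor-definition} should absorb them. The crux is the crossing edges. This is exactly where the neighborhood-minor hypothesis enters: there is a vertex $v\in W$ with $W\cap N_G(W')\subseteq N_H[v]$, meaning every crossing edge attaches to $W$ only at vertices that are neighbors of $v$ in $H$ (or $v$ itself). I would use this to set up a binary matroidal retract in the sense of Definition~\ref{binary matroidal retract-def}: the set $E'$ of edges to be contracted should be a cocircuit-type object in $M(G)^*$, namely the crossing edges should form (a cycle/cocircuit structure corresponding to) a bond or cut of $G$, and the ``paired'' set $E$ should consist of edges incident to $v$ whose intersection pattern with circuits of $M(G)^*$ matches condition~(b) via the symmetry of cut sets. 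Concretely, in cographic terms the circuits are minimal edge cuts (bonds) of $G$, and the condition $W\cap N_G(W')\subseteq N_H[v]$ forces every bond to meet the crossing edges and the $v$-incident edges in the symmetric ``$\{e_{i_1},\dots,e_{i_p}\}$ or complement'' fashion demanded by Definition~\ref{binary matroidal retract-def}(b).

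The heart of the argument, and the step I expect to be the main obstacle, is verifying condition~(b) of Definition~\ref{binary matroidal retract-def} for the chosen pair $(E,E')$ of crossing/$v$-incident edges. This requires a careful combinatorial analysis of how minimal edge cuts of $G$ interact with the vertex $v$ and the boundary between $W$ and $W'$: one must show that for every bond $C$ of $G$ and every $p$, the intersection $C\cap E'$ equals $\{e'_{i_1},\dots,e'_{i_p}\}$ precisely when $C\cap E$ is either $\{e_{i_1},\dots,e_{i_p}\}$ or its complement in $E$. The neighborhood hypothesis should guarantee that a bond separating a vertex-set $A\subseteq V$ uses a crossing edge exactly when it ``flips'' the corresponding $v$-edge, because all crossing edges funnel through $N_H[v]$, so the side of the cut containing $v$ determines whether one reads off the set or its complement. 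I would treat the case $s=1$ (a single crossing point) first to recover the situation of \cite[Theorem~2]{D} and the $|W'|=1$ case, then induct or directly generalize to larger $E,E'$. Finally, I would assemble the steps: internal-$W'$ edges are removed by series/coloop contractions, the crossing edges are removed by the binary matroidal retract just verified, and the remaining bookkeeping edges by deletions, concluding by Definition~\ref{g-series minor-definition} that $M(H)^* = M(G)^*/(E(G)-E(H))$ is a g-series minor of $M(G)^*$.
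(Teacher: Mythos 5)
Your overall strategy --- realize $M(H)^*$ inside $M(G)^*$ by cographic operations and use a binary matroidal retract, keyed to the vertex $v$, to dispose of the crossing edges --- matches the paper's in spirit, but there is a genuine gap in your treatment of the edges internal to $W'$. You propose to realize $M(H)^*=M(G)^*/(E(G)-E(H))$ directly and to ``absorb'' the contraction of an internal $W'$-edge $f$ in $M(G)^*$ by a series or coloop contraction. Neither applies: $f$ lies in a $2$-cocircuit of $M(G)^*$ exactly when $f$ lies in a $2$-circuit of $M(G)$, i.e.\ when $f$ is a parallel edge of $G$, and $f$ is a coloop of $M(G)^*$ exactly when $f$ is a loop of $G$; both are excluded because $G$ is simple. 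Contracting a generic element of a cographic matroid is precisely the operation that Definition~\ref{g-series minor-definition} does not allow (cf.\ Remark~\ref{contraction-face}, where such a contraction fails to yield even an algebra retract), so this step does not go through as stated. The paper's proof sidesteps the issue by never contracting the internal $W'$-edges in $M(G)^*$: it contracts them in the graph $G$ (collapsing $G_{W'}$ to a single vertex $w'$), which by (\ref{dual-deletion-contraction}) and (\ref{graphic-deletion-contraction}) amounts to \emph{deletions} in $M(G)^*$ (together with coloop contractions for the loops that arise), and deletions are always permitted. The end result is still isomorphic to $M(H)^*$ because after the crossing edges are removed the vertex $w'$ is isolated.

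A second, related problem is that your binary matroidal retract is set up in $G$ itself, where the crossing edges need not form a circuit of $M(G)^*$ and, more importantly, there is no canonical bijection between the crossing edges and the edges at $v$ as required by $|E|=|E'|$ and the indexed pairing of Definition~\ref{binary matroidal retract-def}: several crossing edges may attach to the same neighbor $v_i$ of $v$, or to $v$ itself. The preliminary collapse of $G_{W'}$ to $w'$ is exactly what makes $E'=\{\{w',v_i\}: i=1,\ldots,s\}$ a bond (hence a circuit of the cographic matroid) and pairs it with $E=\{\{v,v_i\}: i=1,\ldots,s\}$; and even then the paper must split into two cases, handling $v\in N_{G'}(w')$ by an ordinary deletion of $\{v,w'\}$ followed by series contractions rather than by a retract. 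Your cut-set analysis of condition~(b) is the right idea for the remaining case, but the reduction that makes it applicable is missing, as is the treatment of a disconnected $G_{W'}$.
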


\begin{proof}
 Assume that $H$ is a neighborhood-minor of $G$ with $H=G_W$, where $V(G)=W\cup W'$ with $W\cap W'=\emptyset$ and $W,W'\neq \emptyset$
 such that $W\cap N_G(W')\subseteq N_H[v]$ for some $v\in W$.
 
 If $W\cap N_{G}(W')=\emptyset$, then $G$ is just the disjoint union of the two graphs $H$ and $G_{W'}$, and equivalently $M(G)^*$ is the direct sum of $M(H)^*$ and $M(G_{W'})^*$. In this case, 
 it is easily seen that $M(H)^*$ is a $g$-series minor of $M(G)^*$, 
 as desired, e.g.~by using a suitable number of deletions. 
 
 Next we consider the case $W\cap N_{G}(W')\neq \emptyset$. First suppose that $G_{W'}$ is connected. Let $(W\cap N_{G}(W'))-\{v\}=\{v_1,\ldots,v_s\}$ for some $s\geq 0$, and let $e_i=\{v,v_i\}$ be the corresponding edges for all $i=1,\ldots,s$. 
 
 Now, we consider at first certain actions at the level of graphs and then interpret them in the level of corresponding cographic matroids. Consider the following steps:    
  \begin{enumerate}
  	\item[(i)] By consecutive contractions of edges as well as deleting all loops which might occur during contractions, we get from $G_{W'}$ just a vertex of $W'$, say $w'$, since $G_{W'}$ is connected. In this way a new graph $G'$ is obtained from $G$ on the vertex set $W\cup \{w'\}$ such that $G'_W=G_W$, and  
  	\begin{equation}\label{neighborhood}
  	W\cap N_{G'}(w')=W\cap N_{G}(W').
  	\end{equation}
  	In particular, $W\cap N_{G'}(w')\subseteq N_H[v]$.  
  	
  	\item[(ii)] We distinguish two cases:
  	\begin{enumerate}
  		\item [(1)] Suppose that $v\in N_{G'}(w')$. In this case we contract the edge $e=\{v,w'\}$. Note that some parallel edges to $e_1,\ldots,e_s$ might appear. By removing those new edges, we get $H$ as a minor of $G$. 
  		\item [(2)] Suppose that $v\notin N_{G'}(w')$. Thus, it follows that $s\geq 1$. By (\ref{neighborhood}), we deduce that $w'$ is adjacent to all $v_1,\ldots,v_s$. Let $e'_i=\{w',v_i\}$ for $i=1,\ldots,s$. Then, by removing the edges $e'_1,\ldots, e'_s$, we obtain a graph, say $H'$, which is just the union of $H$ and the isolated vertex $w'$. Then, we disregard the isolated vertex and so we get the desired result. 
  	\end{enumerate}
  \end{enumerate}
Using the aforementioned steps, we interpret the equivalent and analogous steps at the level of cographic matroids:

\begin{enumerate}
	\item[(i)$'$] As we discussed in Section~\ref{matroids}, the contractions of edges and deletions of loops in step~(i), result in certain deletions and coloop contractions, respectively, in the corresponding cographic matroids, and hence it follows that $M(G')^*$ is a $g$-series minor of $M(G)^*$.     
	\item[(ii)$'$]  Corresponding to the cases~(1) and~(2) we have:  
	\begin{enumerate}
		\item [(1)$'$] Case~(1) implies that $e$ is an element of $M(G')^*$ which is deleted from $M(G')^*$. The possible occurring parallel edges in~(1), imply certain coparallel elements in the corresponding cographic matroid, and we contract them to obtain $M(H)^*$. Thus, in this case, $M(H)^*$ is a $g$-series minor of $M(G')^*$, and hence a $g$-series minor of $M(G)^*$ by~(i)$'$. 
		\item [(2)$'$] Case~(2) implies that $e$ is not an element of $M(G')^*$, but $e_1,\ldots,e_s$ are elements of $M(G')^*$ for some $s\geq 1$. Then one sees that 
		\[
		E(M(G')^*)=E(H)\cup \{e'_1,\ldots,e'_s\}.
		\]
		Deleting the edges $e'_1,\ldots,e'_s$ from $G'$, yields the contraction $M(H')^*=M(G')^*/\{e'_1,\ldots,e'_s\}$ from $M(G')^*$. As $w'$ is just an isolated vertex of $H'$, one knows that $M(H')^*$ and $M(H)^*$ are isomorphic matroids. We claim that $M(G')^*/\{e'_1,\ldots,e'_s\}$ is a binary  matroidal retract of $M(G')^*$. 
		Let $E=\{e_1,\ldots,e_s\}$ and $E'=\{e'_1,\ldots,e'_s\}$, which are disjoint subsets of $E(M(G')^*)$. Then it is clear that $E'$ is a minimal edge cut of $G'$, because deleting the edges in $E'$ from $G'$, disconnects the vertex $w'$ from $H$, and none of the proper subsets of $E'$ disconnects $G'$. Thus, $E'$ is a circuit of $M(G')^*$, and hence Definition~\ref{binary matroidal retract-def}~(a) is fulfilled. To verify~(b) in the same definition, let $C\in \mathcal{C}(M(G')^*)$. Then according to 
		Example~\ref{cut polytope}, it follows that $C=\mathrm{Cut}(A)$ is a minimal cut set of $G'$ for a set $\emptyset\neq A\subseteq V(G)$. Suppose that $C\cap E'=\{e'_{i_1},\ldots,e'_{i_p}\}$ for some $p\in \{0,\ldots,s-1\}$. If either $v,w'\in A$ or $v,w'\in A^c$, then just by the definition of a cut set we deduce that either  $A^c\cap \{v_{1},\ldots,v_{s}\}=\{v_{i_1},\ldots,v_{i_p}\}$ or $A\cap \{v_{1},\ldots,v_{s}\}=\{v_{i_1},\ldots,v_{i_p}\}$, respectively, which implies that 
		$C\cap E=\{e_{i_1},\ldots,e_{i_p}\}$. 
		If $v\in A$ and $w'\in A^c$ or vice versa, then similarly it follows that 
		$C\cap E=E-\{e_{i_1},\ldots,e_{i_p}\}$. Hence we obtain one implication of the statement of condition~(2) in Definition~\ref{binary matroidal retract-def}. The other implication follows similarly by symmetry.     
	\end{enumerate}  
\end{enumerate}

Finally, suppose that $G_{W'}$ is disconnected with connected components on disjoint sets of vertices $W'_1,\ldots,W'_r$. Set $G_0=G$ and $G_t=G_{V-\cup_{i=1}^t W'_i}$ for all $t=1,\ldots,r$. Since $H$ is a neighborhood-minor of $G$, 
it follows that $G_t$ is a neighborhood-minor of $G_{t-1}$ for all $t=1,\ldots,r$. Notice that $G_{W'_t}$ is clearly connected for each $t$. Then, by changing the role of $G_{W'}$ by $G_{W'_t}$ and the role of $G_W$ by $G_t$ in the previous cases of the proof, no matter 
$(V-\cup_{i=1}^t W'_i)\cap N_G(W'_t)$ is empty or not, 
it follows that $M(G_t)^*$ is a $g$-series minor of $M(G_{t-1})^*$ for all $t$. Hence, $M(H)^*$ is a $g$-series minor of $M(G)^*$ which concludes the proof.  
\end{proof}

The next corollary follows immediately from Corollary~\ref{algebra retract-corollary} and Theorem~\ref{neighborhood-g-series minor-corollary}.

\begin{Corollary}\label{neighborhood-retract-corollary}
	{\em (}\cite[Theorem~5.4]{RS}{\em)} Let $G$ be a simple graph and $H$ be a neighborhood-minor of $G$. Then $\KK[\mathrm{Cut}(H)]$ is an algebra retract of $\KK[\mathrm{Cut}(G)]$.  
\end{Corollary}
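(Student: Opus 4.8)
The plan is to obtain this corollary as a direct chaining of two results already established: the matroid-theoretic transfer in Theorem~\ref{neighborhood-g-series minor-corollary} and the algebraic transfer in Corollary~\ref{algebra retract-corollary}, with one intermediate translation between cut algebras and cycle algebras of cographic matroids.

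First I would record that, since $G$ and $H$ are graphs, their cographic matroids $M(G)^*$ and $M(H)^*$ are binary, being duals of graphic matroids (as noted in Section~\ref{matroids}). The hypothesis that $H$ is a neighborhood-minor of $G$ then lets me apply Theorem~\ref{neighborhood-g-series minor-corollary}, which delivers precisely the statement that $M(H)^*$ is a $g$-series minor of $M(G)^*$. Feeding this into Corollary~\ref{algebra retract-corollary}, applied to the binary matroid $M(G)^*$ together with its $g$-series minor $M(H)^*$, I immediately get that $\KK[\mathrm{Cyc}(M(H)^*)]$ is an algebra retract of $\KK[\mathrm{Cyc}(M(G)^*)]$.

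It then remains to pass from cycle algebras of cographic matroids back to the cut algebras appearing in the statement. By Example~\ref{cut ideal}(2), with $M=M(G)^*$, the cycle algebra $\KK[\mathrm{Cyc}(M(G)^*)]$ is naturally identified with $\KK[\mathrm{Cut}(G)]$ when $G$ is connected and, in general, is naturally isomorphic to it; the same applies with $G$ replaced by $H$. Since being an algebra retract is preserved under graded isomorphisms of both the source and the target — one simply conjugates the maps $\iota$ and $\gamma$ of Definition~\ref{retract-def} by these isomorphisms — I can transport the retract relation established for the cycle algebras to the corresponding relation for the cut algebras, yielding that $\KK[\mathrm{Cut}(H)]$ is an algebra retract of $\KK[\mathrm{Cut}(G)]$.

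I do not expect a genuine obstacle here, as the corollary is essentially a formal consequence of the two cited results. The only point deserving explicit care is the disconnected case of the cut/cycle identification, where the two algebras are merely isomorphic rather than equal (their defining ideals may differ by linear forms). There I would spell out that the retract structure survives this graded isomorphism, rather than silently identifying the algebras; everything else is a direct composition.
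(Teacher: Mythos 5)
Your proposal is correct and matches the paper's own argument, which derives the corollary directly from Theorem~\ref{neighborhood-g-series minor-corollary} combined with Corollary~\ref{algebra retract-corollary}. Your extra care in transporting the retract structure through the cut/cycle identification of Example~\ref{cut ideal}(2) (including the disconnected case) is a welcome elaboration of a step the paper leaves implicit.
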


We see that neighborhood-minors yield a method for cographic matroids to obtain $g$-series minors and thus algebra retracts on the level of algebras. We end this section by posing the following problem concerning the Eulerian algebras of graphs to find similar results in this case.

\begin{Problem}\label{Eulerian retract}
Let $H$ and $G$ be two graphs. It would be interesting to provide some explicit graphical conditions on $H$ and $G$ under which  $M(H)$ is a g-series minor of $M(G)$, and hence $\KK[\mathrm{Euler}(H)]$ is an algebra retract of $\KK[\mathrm{Euler}(G)]$.  
\end{Problem}

\section{The comparison of the highest degrees of minimal homogeneous generators of cycle ideals of matroids}\label{highest degree}

In this section, we consider cycle ideals of matroids in more detail and in particular discuss certain situations where one can relate or compare the highest degree of minimal homogeneous generating sets of cycle ideals of two matroids with each other. Let denote by $\mu(M)$ the highest degree of an element of a minimal homogeneous generating set of $I_{\mathrm{Cyc}(M)}$ and set $\deg (0)=-\infty$. 

First we characterize when a cycle ideal is zero. Let $M$ be matroid. For simplicity, let 
\[
d(M):=\text{the number of coparallel classes of}~M.
\]
As it was mentioned in Section~\ref{cycle polytopes}, it is known that 
$\dim P_{\mathrm{Cyc}}(M)=d(M)$. This together with \cite[Proposition~4.22]{BG} imply that the Krull dimension of the cycle algebra of $M$ is given by
\[
\dim \KK[\mathrm{Cyc}(M)]= d(M)+1,
\]
and hence 
\begin{equation}\label{height}
\height I_{\mathrm{Cyc}(M)}=|\mathrm{Cyc}(M)|-d(M)-1. 
\end{equation}

\begin{Lemma}\label{zero ideal}
 Let $M$ be a matroid. Then the following statements are equivalent:
 	\begin{enumerate}
 	\item[{\em(a)}] $I_{\mathrm{Cyc}(M)}=\langle 0 \rangle$;
 	\item[{\em(b)}] $d(M)=|\mathrm{Cyc}(M)|-1$.
 \end{enumerate}
\end{Lemma}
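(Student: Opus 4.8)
The plan is to read the equivalence directly off the height formula \eqref{height}, using that the cycle ideal is prime. First I would recall the setup: $I_{\mathrm{Cyc}(M)}=\ker\phi_M$ is the kernel of the $\KK$-algebra surjection $\phi_M\colon S_M\to \KK[\mathrm{Cyc}(M)]$. Since $\KK[\mathrm{Cyc}(M)]$ is by construction a $\KK$-subalgebra of the polynomial ring $R_M$, it is an integral domain, and therefore $I_{\mathrm{Cyc}(M)}$ is a \emph{prime} ideal of the polynomial ring $S_M=\KK[x_C:C\in \mathrm{Cyc}(M)]$. This observation is what makes the height argument work in both directions.

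For the implication (a)$\implies$(b), I would simply note that if $I_{\mathrm{Cyc}(M)}=\langle 0\rangle$, then $\height I_{\mathrm{Cyc}(M)}=0$; substituting this into \eqref{height} gives
\[
0=|\mathrm{Cyc}(M)|-d(M)-1,
\]
which is exactly statement (b).

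For the converse (b)$\implies$(a), I would argue as follows. Assuming (b), the height formula \eqref{height} yields $\height I_{\mathrm{Cyc}(M)}=0$. Now $S_M$ is an integral domain, so its zero ideal $\langle 0\rangle$ is prime and is contained in every nonzero prime ideal; consequently any nonzero prime ideal of $S_M$ has height at least $1$. Since $I_{\mathrm{Cyc}(M)}$ is prime and has height $0$, it cannot be nonzero, and hence $I_{\mathrm{Cyc}(M)}=\langle 0\rangle$, which is (a).

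I do not expect a serious obstacle here: essentially all the work has already been done in establishing \eqref{height} (via $\dim \KK[\mathrm{Cyc}(M)]=d(M)+1$ and the relation $\height I_{\mathrm{Cyc}(M)}+\dim S_M/I_{\mathrm{Cyc}(M)}=\dim S_M=|\mathrm{Cyc}(M)|$). The only point requiring care is the bookkeeping that $I_{\mathrm{Cyc}(M)}$ is prime and that a height-zero prime in a domain must be the zero ideal; once these are in place, the equivalence is immediate.
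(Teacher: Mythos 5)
Your argument is correct and matches the paper's proof exactly: the paper likewise derives the equivalence from the height formula \eqref{height} together with the observation that $I_{\mathrm{Cyc}(M)}$ is a prime ideal, so that it is zero if and only if its height is zero. You have merely spelled out the primality and height-zero steps in more detail than the paper does.
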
 

\begin{proof}
	The desired result follows from \eqref{height} and since clearly  $I_{\mathrm{Cyc}(M)}=\langle 0 \rangle$ if and only if 
	$\height I_{\mathrm{Cyc}(M)}=0$, since it is a prime ideal.  
\end{proof}

\begin{Corollary}\label{zero ideal-cosimple}
 Let $M$ be a cosimple matroid. Then the following statements are equivalent: 
 \begin{enumerate}
 	\item[{\em(a)}] $I_{\mathrm{Cyc}(M)}=\langle 0 \rangle$;
 	\item[{\em(b)}] $|E(M)|=|\mathrm{Cyc}(M)|-1$.
 \end{enumerate}
\end{Corollary}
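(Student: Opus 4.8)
The plan is to reduce the statement directly to Lemma~\ref{zero ideal} by computing the number of coparallel classes $d(M)$ explicitly under the cosimplicity hypothesis. First I would invoke Lemma~\ref{zero ideal}, which asserts that $I_{\mathrm{Cyc}(M)}=\langle 0\rangle$ holds if and only if $d(M)=|\mathrm{Cyc}(M)|-1$. Thus the entire task is to identify $d(M)$ in terms of the cardinality of the ground set.

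Next I would use that $M$ is cosimple. By definition, a cosimple matroid has no coloops and no non-trivial coparallel classes; as recalled in Section~\ref{matroids}, this forces $\{e\}$ to be a coparallel class for every $e\in E(M)$, and these singletons to be the only coparallel classes. Hence the coparallel classes are in bijection with the elements of the ground set, so that $d(M)=|E(M)|$. Substituting this equality into the criterion of Lemma~\ref{zero ideal} immediately turns the condition $d(M)=|\mathrm{Cyc}(M)|-1$ into $|E(M)|=|\mathrm{Cyc}(M)|-1$, which is exactly the equivalence of (a) and (b) claimed.

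There is essentially no obstacle here: all the real content sits in Lemma~\ref{zero ideal} (itself resting on the dimension formula for $P_{\mathrm{Cyc}}(M)$ and \eqref{height}), and the corollary is simply its specialization obtained by replacing the number of coparallel classes with $|E(M)|$. The one point I would be careful to state explicitly is that it is precisely cosimplicity that legitimizes this replacement, so I would cite the characterization of coparallel classes of cosimple matroids from Section~\ref{matroids} rather than leave it implicit.
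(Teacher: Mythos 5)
Your argument is correct and coincides with the paper's own proof: both reduce to Lemma~\ref{zero ideal} and use cosimplicity to conclude that the coparallel classes are exactly the singletons, so $d(M)=|E(M)|$. Nothing further is needed.
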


\begin{proof}
Since $M$ is cosimple, $\{e\}$ is a coparallel class of $M$ for any $e\in E(M)$ and these are the only coparallel classes, as it was mentioned in Section~\ref{matroids}. Thus, $d(M)=|E(M)|$ which together with Lemma~\ref{zero ideal} yield the desired result.  
\end{proof}

The following example provides a cosimple matroid satisfying the equivalent conditions of Corollary~\ref{zero ideal-cosimple}.

\begin{Example}\label{F7-1}
{\em The \emph{Fano matroid} $F_7$ is a matroid with the ground set $E=\{1,\ldots,7\}$ whose bases are all 3-subsets of $E$, except the ones shown in Figure~\ref{Fano} by straight lines and a curve, i.e.~$C_1=\{1,2,6\}$, $C_2=\{1,3,5\}$, $C_3=\{2,3,4\}$, $C_4=\{2,5,7\}$, $C_5=\{3,6,7\}$, $C_6=\{1,4,7\}$, $C_7=\{4,5,6\}$. This implies that 
	\[
	\mathcal{C}(F_7)=\{C_i,C_i^c : i=1,\ldots,7\},
	\]	
	where $C_i^c$ denotes the complementary set of $C_i$ for each~$i$. Then, $F_7$ is in particular a simple matroid. Thus, one can observe that $F_7^*$ is a cosimple matroid with 
	\[
	\mathcal{C}(F_7^*)=\{C_i^c : i=1,\ldots,7\},
	\]  	
and hence $\mathrm{Cyc}(F_7^*)=\{\emptyset\}\cup \mathcal{C}(F_7^*)$. Thus, Corollary~\ref{zero ideal-cosimple} implies that $I_{\mathrm{Cyc}(F_7^*)}=\langle 0 \rangle$.  	
}
\end{Example}

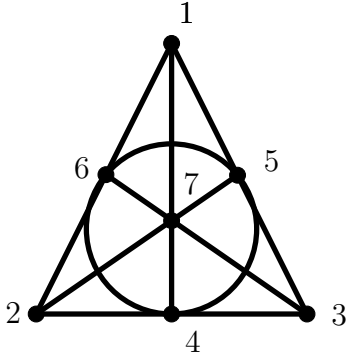
\begin{figure}[h!]
\centering 
\begin{tikzpicture}[scale = 1.2]
\definecolor{zzttqq}{rgb}{1,1,1}
\fill[line width=2pt,color=zzttqq] (-1,5) -- (-2.5,2) -- (0.5,2) -- cycle;
\draw [line width=2pt] (-1,5)-- (-2.5,2);
\draw [line width=2pt] (-2.5,2)-- (0.5,2);
\draw [line width=2pt] (0.5,2)-- (-1,5);
\draw [line width=2pt] (-1.0009636766678007,2.9433265697243844) circle (0.9433270619571432cm);
\draw [line width=2pt] (-1,5)-- (-1,2);
\draw [line width=2pt] (-2.5,2)-- (-0.2695,3.539);
\draw [line width=2pt] (-1.7275,3.545)-- (0.5,2);
\draw (-1.04,5.57) node[anchor=north west] {1};
\draw (-1.04,5.57) node[anchor=north west] {1};
\draw (-2.96,2.25) node[anchor=north west] {2};
\draw (0.65,2.23) node[anchor=north west] {3};
\draw (-0.97,1.93) node[anchor=north west] {4};
\draw (-0.1,3.93) node[anchor=north west] {5};
\draw (-2.2,3.85) node[anchor=north west] {6};
\draw (-0.99,3.68) node[anchor=north west] {7};
\begin{scriptsize}
\draw [fill=black] (-1,5) circle (2.5pt);
\draw [fill=black] (-2.5,2) circle (2.5pt);
\draw [fill=black] (0.5,2) circle (2.5pt);
\draw [fill=black] (-1.7275,3.545) circle (2.5pt);
\draw [fill=black] (-0.2695,3.539) circle (2.5pt);
\draw [fill=black] (-1,2) circle (2.5pt);
\draw [fill=black] (-1,3.0349697377269673) circle (2.5pt);
\end{scriptsize}
\end{tikzpicture}
\caption{Geometric representation of the Fano matroid $F_7$}
\label{Fano}
\end{figure}

In the special case of cographic matroids of simple graphs, the cycle ideal is rarely zero. Indeed, $I_{\mathrm{Cut}(G)}$ is zero if and only if $G$ is the complete graph $K_2$ or $K_3$, see \cite[Proposition~3.1]{RS}. But, by Lemma~\ref{zero ideal} and Example~\ref{F7-1}, it seems that in general there are more interesting cases with zero cycle ideals. Now, it is natural to pose the following problem:

\begin{Problem}\label{zero characterization}
It would be interesting to give an excluded minor characterization or explicit list of all matroids $M$ with $I_{\mathrm{Cyc}(M)}=\langle 0 \rangle$. 
\end{Problem}

Next, we investigate the numbers $\mu(M)$ for matroids whose cycle ideals are not zero.

\begin{Lemma}\label{mu}
	Let $M$ be a matroid. Then the following statements hold: 
	\begin{enumerate}
		\item [{\em(a)}] 
		${(I_{\mathrm{Cyc}(M)})}_1=\langle 0 \rangle$. In particular, if $I_{\mathrm{Cyc}(M)}\neq \langle 0 \rangle$, then  $\mu(M)\geq 2$.  
		
	    \item [{\em(b)}] If $M'$ is a matroid such that $\KK[\mathrm{Cyc}(M')]$ is an algebra retract of $\KK[\mathrm{Cyc}(M)]$, then $\mu(M')\leq \mu(M)$.   
	\end{enumerate}
\end{Lemma}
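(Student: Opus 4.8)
The plan is to establish (a) by hand from the explicit toric presentation $\phi_M$, and then to derive (b) as a formal consequence of the monotonicity of graded Betti numbers under retraction recorded in Proposition~\ref{Betti}.

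For (a), I would show that $\phi_M$ is injective on the degree-$1$ strand of $S_M$. Recall that $(S_M)_1$ is spanned over $\KK$ by the variables $x_C$ with $C\in\mathrm{Cyc}(M)$, and $\phi_M(x_C)=\mathbf{y}^Cz$. If $C\neq C'$ are distinct cycles, then $\chi_C\neq\chi_{C'}$, so $\mathbf{y}^C=\prod_{e\in C}y_e$ and $\mathbf{y}^{C'}$ are different monomials in $R_M$, and multiplying by $z$ keeps them distinct. Hence the family $\{\mathbf{y}^Cz:C\in\mathrm{Cyc}(M)\}$ consists of pairwise distinct monomials, so it is $\KK$-linearly independent in $R_M$. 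Consequently a linear form $\sum_C a_Cx_C$ can lie in $I_{\mathrm{Cyc}(M)}=\ker\phi_M$ only if all $a_C=0$, which proves $(I_{\mathrm{Cyc}(M)})_1=\langle 0\rangle$. Since $\phi_M$ also restricts to the identity on $(S_M)_0=\KK$, the ideal has no minimal generator in degree $0$ or $1$; therefore, if $I_{\mathrm{Cyc}(M)}\neq\langle 0\rangle$, then any minimal homogeneous generator has degree at least $2$, so $\mu(M)\geq 2$.

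For (b), I would translate the statement into Betti numbers. By definition, $\mu(M')$ equals $\max\{j:\beta_{0,j}^{S_{M'}}(I_{\mathrm{Cyc}(M')})\neq 0\}$, where $\beta_{0,j}$ counts the minimal degree-$j$ generators, and similarly for $M$. By part (a), neither $I_{\mathrm{Cyc}(M')}$ nor $I_{\mathrm{Cyc}(M)}$ contains a linear form, so Proposition~\ref{Betti} applies with $A=S_{M'}$, $B=S_M$, $I=I_{\mathrm{Cyc}(M')}$ and $J=I_{\mathrm{Cyc}(M)}$. Its part (a), specialized to $i=0$, gives $\beta_{0,j}^{S_{M'}}(I_{\mathrm{Cyc}(M')})\leq\beta_{0,j}^{S_M}(I_{\mathrm{Cyc}(M)})$ for all $j$. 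Evaluating at $j=\mu(M')$ forces $\beta_{0,\mu(M')}^{S_M}(I_{\mathrm{Cyc}(M)})\neq 0$, whence $\mu(M')\leq\mu(M)$.

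The arguments are largely routine; the only points requiring care are verifying that part (a) supplies precisely the ``no linear forms'' hypothesis needed to invoke Proposition~\ref{Betti}, and matching the definition of $\mu$ with the vanishing pattern of $\beta_{0,j}$. I do not anticipate a substantive obstacle beyond these bookkeeping checks.
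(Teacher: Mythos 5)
Your proof is correct and follows essentially the same route as the paper: part (a) is verified directly from the toric presentation $\phi_M$ (the paper phrases it via the non-vanishing of binomials $x_C-x_D$, you via linear independence of the distinct monomials $\mathbf{y}^Cz$, which amounts to the same observation), and part (b) is deduced from part (a) together with Proposition~\ref{Betti}(a) applied to the $\beta_{0,j}$. No issues.
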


\begin{proof}
	Part (a) follows, since by definition
	no binomial of the form $x_C-x_D$ belongs to $I_{\mathrm{Cyc}(M)}$, where $C\neq D$ are cycles of $M$. Part~(b) follows from part~(a) together with Proposition~\ref{Betti}~(a).   
\end{proof}

We see by Lemma~\ref{mu}~(a) that cycle ideals never contain linear forms. As a consequence of Lemma~\ref{mu}~(b) together with Proposition~\ref{face retract}, Theorem~\ref{matroidal retract-theorem} and Corollary~\ref{algebra retract-corollary}, some operations are considered in the next corollary under which the highest degree of a minimal homogeneous set of generators of the corresponding ideals does not increase.  

\begin{Corollary}\label{mu comparison}
	Let $M$ be a matroid and let $M'$ be a minor of $M$. Assume that one of the following holds:  
	\begin{enumerate}
		\item[{\em(a)}] $M'$ is a series minor of $M$, 
		\item[{\em(b)}] $M'$ is a binary matroidal retract of $M$ and $M$ is binary, or
		\item[{\em(c)}] $M'$ is a g-series minor of $M$ and $M$ is binary.   
	\end{enumerate} 
	Then 
	\[
	\mu(M')\leq \mu(M).
	\] 
\end{Corollary}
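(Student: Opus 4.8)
The plan is to reduce Corollary~\ref{mu comparison} to the retract machinery already established, using Lemma~\ref{mu}~(b) as the bridge. The key observation is that Lemma~\ref{mu}~(b) states exactly that if $\KK[\mathrm{Cyc}(M')]$ is an algebra retract of $\KK[\mathrm{Cyc}(M)]$, then $\mu(M')\leq \mu(M)$. So in each of the three cases, all I would need to verify is that the hypothesis yields such an algebra retract, after which the conclusion is immediate.

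For case~(a), where $M'$ is a series minor of $M$, I would invoke Proposition~\ref{face retract}. That proposition states that if $M'$ is obtained by a sequence of series minors and coloop contractions from $M$, then $\KK[\mathrm{Cyc}(M')]$ is an algebra retract of $\KK[\mathrm{Cyc}(M)]$. Since a series minor is in particular such a sequence (using only series contractions and deletions, no coloop contractions needed), the retract follows directly, and hence $\mu(M')\leq \mu(M)$. For case~(b), where $M'$ is a binary matroidal retract of the binary matroid $M$, I would combine Theorem~\ref{binary matroidal retract-theorem} with Theorem~\ref{matroidal retract-theorem}: the former tells us that a binary matroidal retract of a binary matroid is a matroidal retract, and the latter then yields that $\KK[\mathrm{Cyc}(M')]$ is an algebra retract of $\KK[\mathrm{Cyc}(M)]$. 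Again Lemma~\ref{mu}~(b) finishes the argument. For case~(c), where $M'$ is a $g$-series minor of the binary matroid $M$, I would simply cite Corollary~\ref{algebra retract-corollary}, which directly states that for a binary matroid $M$ and a $g$-series minor $M'$, the algebra $\KK[\mathrm{Cyc}(M')]$ is an algebra retract of $\KK[\mathrm{Cyc}(M)]$.

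There is essentially no serious obstacle here, since the corollary is genuinely a formal consequence of results proved earlier in the paper; the statement is flagged in the introduction as following from Lemma~\ref{mu}~(b) together with Proposition~\ref{face retract}, Theorem~\ref{matroidal retract-theorem} and Corollary~\ref{algebra retract-corollary}. The only point worth a moment's care is making sure the hypothesis in each case genuinely produces an algebra retract of the cycle algebras (and not merely some weaker relationship), so that Proposition~\ref{Betti}~(a)—on which Lemma~\ref{mu}~(b) rests—applies. This in turn requires that the relevant cycle ideals contain no linear forms, which is precisely the content of Lemma~\ref{mu}~(a); so the three cases are all covered once the retract is in place. Thus the proof is a short three-case dispatch, each case citing one prior result to obtain the retract and then Lemma~\ref{mu}~(b) to obtain the inequality $\mu(M')\leq \mu(M)$.
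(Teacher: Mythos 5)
Your proposal is correct and follows exactly the route the paper intends: the corollary is stated as an immediate consequence of Lemma~\ref{mu}~(b) combined with Proposition~\ref{face retract} for case~(a), Theorems~\ref{matroidal retract-theorem} and~\ref{binary matroidal retract-theorem} for case~(b), and Corollary~\ref{algebra retract-corollary} for case~(c). Your additional remark that Lemma~\ref{mu}~(a) guarantees the absence of linear forms needed for Proposition~\ref{Betti} is also exactly the point the paper makes just before stating that proposition.
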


Next, the following theorem provides some situations where the highest degrees of minimal homogeneous sets of generators of cycle ideals are the same, though the underlying matroids are not. 

\begin{Theorem}\label{simplification}
	Let $M$ and $M'$ be two matroids where $M'$ is obtained by either 
	\begin{enumerate}
		\item [{\em(a)}] a coloop contraction, or
		\item [{\em(b)}] a series contraction
	\end{enumerate}  
	of $M$. Then $\mu(M)=\mu(M')$.   	
\end{Theorem}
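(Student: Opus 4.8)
The plan is to reduce both cases to the observation, already implicit in the proof of Theorem~\ref{face}, that a coloop contraction and a series contraction do \emph{not} merely realize $P_{\mathrm{Cyc}}(M')$ as a proper face of $P_{\mathrm{Cyc}}(M)$, but as the \emph{entire} polytope $P_{\mathrm{Cyc}}(M)$ up to a lattice-preserving affine isomorphism. Once the two cycle polytopes are identified by a unimodular affine map carrying lattice points to lattice points, their toric algebras become isomorphic as graded $\KK$-algebras in such a way that the generating sets $\mathrm{Cyc}(M)$ and $\mathrm{Cyc}(M')$ correspond bijectively. An isomorphism of polynomial rings sending variables to variables and carrying $I_{\mathrm{Cyc}(M)}$ onto $I_{\mathrm{Cyc}(M')}$ preserves minimal homogeneous generating sets degree by degree, hence forces $\mu(M)=\mu(M')$. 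So the whole argument is: exhibit the bijection of cycles and check that it induces an isomorphism of the defining ideals.

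For the coloop case~(a) this is essentially immediate. If $e$ is a coloop and $M'=M/e$, then by \eqref{cycles-coloop} one has $\mathrm{Cyc}(M)=\mathrm{Cyc}(M/e)$, and since $e$ lies in no circuit the $e$-coordinate of every characteristic vector $\chi_C$ is $0$. Thus, after deleting the identically-zero $e$-coordinate, the sets of characteristic vectors in $\RR^{E}$ and in $\RR^{E'}$ literally coincide, and the relations $\sum_i\chi_{C_i}=\sum_i\chi_{D_i}$ are the same in both ambient spaces. Under the identity bijection on cycles the two toric ideals $I_{\mathrm{Cyc}(M)}$ and $I_{\mathrm{Cyc}(M')}$ are therefore identified, giving $\mu(M)=\mu(M')$.

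For the series case~(b), let $\{e,f\}$ be the cocircuit and $M'=M/e$. I would invoke the explicit affine isomorphism $\varphi\colon P_{\mathrm{Cyc}}(M')\to P_{\mathrm{Cyc}}(M)$ from the proof of Theorem~\ref{face}(b), which simply duplicates the $f$-coordinate into the new $e$-coordinate. Since every cycle vector is a $0/1$-vector, and every $0/1$-point of a $0/1$-polytope is a vertex (it uniquely minimizes a suitable linear functional over $\{0,1\}^{E}$), the vertices of each cycle polytope are exactly its cycle vectors; hence $\varphi$ restricts to a bijection $\beta\colon \mathrm{Cyc}(M')\to\mathrm{Cyc}(M)$ and induces a $\KK$-algebra isomorphism $S_{M'}\to S_{M}$, $x_{C'}\mapsto x_{\beta(C')}$. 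The key point to verify is that this isomorphism carries $I_{\mathrm{Cyc}(M')}$ onto $I_{\mathrm{Cyc}(M)}$: a relation $\sum_{i=1}^d\chi_{C'_i}=\sum_{i=1}^d\chi_{D'_i}$ in $\RR^{E'}$ holds if and only if its $\varphi$-image holds in $\RR^{E}$, because applying $\varphi$ only adjoins the equation in the $e$-coordinate, which is identical to the $f$-coordinate equation already present. Thus the defining binomials correspond exactly, and $\mu(M)=\mu(M')$ follows.

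The main obstacle is precisely this last verification in case~(b): that duplicating a coordinate neither creates nor destroys any $\ZZ$-linear relation among characteristic vectors, so that the two cycle ideals match under $\beta$. Everything else (the bijection of cycles, the ring isomorphism, the invariance of minimal generating degrees under such an isomorphism) is formal. As a consistency check one notes that the inequality $\mu(M')\le\mu(M)$ also drops out of Corollary~\ref{mu comparison}(a) via Proposition~\ref{face retract}, since a series contraction is in particular a series minor; the content beyond that is the reverse inequality, which the polytope-level isomorphism — as opposed to a mere face embedding — supplies for free.
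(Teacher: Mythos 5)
Your proposal is correct and follows essentially the same route as the paper: case (a) is the identical observation that $\mathrm{Cyc}(M)=\mathrm{Cyc}(M/e)$ forces equal ideals, and in case (b) your bijection $\beta$ (extracted from the polytope isomorphism $\varphi$ of Theorem~\ref{face}(b)) is exactly the paper's variable substitution $\alpha\colon x_C\mapsto x_C$ or $x_{C\cup\{e\}}$, with the same key verification that duplicating the $f$-coordinate into the $e$-coordinate neither creates nor destroys relations $\sum_i\chi_{C_i}=\sum_i\chi_{D_i}$. The only cosmetic difference is that the paper defines the substitution combinatorially and checks both inclusions on generating sets, whereas you derive it from the vertex bijection of the affinely isomorphic polytopes; the content is the same.
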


\begin{proof}
	\begin{enumerate}
		
	   \item [(a)] Assume that $M'=M/e$ where $e$ is a coloop of $M$. In the proof of Theorem~\ref{face} part~(c), we observed that, in this case, $\mathrm{Cyc}(M)=\mathrm{Cyc}(M')$ and, in particular, no cycle of $M$ contains $e$. This, by definition, implies that 
	$I_{\mathrm{Cyc}(M)}=I_{\mathrm{Cyc}(M')}$ (as ideals in $S_M=S_{M'}$), and hence $\mu(M)=\mu(M')$. 
	
	  \item [(b)] Next assume that $M'=M/e$ where $\{e,f\}$ is a cocircuit of $M$ for some $f\in E(M)$. By Corollary~\ref{mu comparison}~(a), we have $\mu(M')\leq \mu(M)$. We claim that the other inequality also holds. For this, at first observe that 
	\[
	\mathrm{Cyc}(M')=\{D-\{e\}: D\in \mathrm{Cyc}(M)\}.
	\] 
	This indeed follows, since we have $\mathcal{C}(M')=\{C-\{e\}: C\in \mathcal{C}(M)\}$ which means that the sets $C-\{e\}$ are all minimal. The latter equation follows from the fact that if  $C\in \mathcal{C}(M)$ with $e\in C$, then $C-\{e\}$ is not contained in any circuits of $M$ except $C$, since as it was discussed in the proof of Theorem~\ref{face}~(b), any circuit of $M$ contains $e$ if and only if it contains $f$. Indeed, in the critical case which is the case that $e\in C$, if $C-\{e\}\subseteq D$ for some $D\in \mathcal{C}(M)$, then $f\in D$ which yields $e\in D$. Therefore, $C\subseteq D$, and hence $C=D$, since both are circuits. 
	
	Define the homomorphism $\alpha:S_{M'}\rightarrow S_M$ with     
	\begin{displaymath}
	\alpha(x_C)= \left \{\begin {array}{ll}
	x_C&\mathrm{if}~~~f\notin C,\\
	x_{C\cup \{e\}}&\mathrm{if}~~~f\in C,
	\end{array}\right.
	\end{displaymath}
	for any $C\in \mathrm{Cyc}(M')$. Then $\alpha$ clearly provides an isomorphism between $S_{M'}$ and $S_M$. Let $\{f_1,\ldots,f_k\}$ be a minimal homogeneous generating set for $I_{\mathrm{Cyc}(M')}$. We claim that $\{\alpha(f_1),\ldots,\alpha(f_k)\}$ is a generating set for $I_{\mathrm{Cyc}(M)}$. Then, it follows that $\mu(M)\leq \mu(M')$, since $\alpha$ is a homogeneous homomorphism of degree zero. 
	
	First we need to show that $\alpha(f_j)\in I_{\mathrm{Cyc}(M)}$ for each $j$. Since $f_j\in I_{\mathrm{Cyc}(M')}$, we may assume that $f_j$ is a homogeneous binomial, namely $f_j=\prod_{i=1}^dx_{C_i-\{e\}}-\prod_{i=1}^dx_{D_i-\{e\}}$ for some $d$ and $C_i,D_i\in \mathrm{Cyc}(M)$. It follows that $\phi_{M'}(f_j)=0$, and hence $\sum_{i=1}^{d}\chi_{C_i-\{e\}}=\sum_{i=1}^{d}\chi_{D_i-\{e\}}$. This implies that the number of those $C_i$'s and $D_i$'s which contain $f$ are the same. Since exactly such $C_i$'s and $D_i$'s also contain $e$, it follows that  
	$\sum_{i=1}^{d}\chi_{C_i}=\sum_{i=1}^{d}\chi_{D_i}$ which yields 
	\[\alpha(f_j)=
	\prod_{i=1}^dx_{C_i}-\prod_{i=1}^dx_{D_i}\in I_{\mathrm{Cyc}(M)},
	\] 
	as desired. Next, assume that $\mathcal{G}$ is a generating set of homogeneous binomials for $I_{\mathrm{Cyc}(M)}$, and let $g\in \mathcal{G}$ with $g=\prod_{i=1}^dx_{C_i}-\prod_{i=1}^dx_{D_i}$ for some $d$ and some cycles $C_i$ and $D_i$ of $M$. Then 
	$\alpha^{-1}(g)=\prod_{i=1}^dx_{C_i-\{e\}}-\prod_{i=1}^dx_{D_i-\{e\}}$ which is an element of $I_{\mathrm{Cyc}(M')}$ by a similar argument as above. Hence, we have $\alpha^{-1}(g)=\sum_{i=1}^{k}h_if_i$ for some $h_i\in S_{M'}$,  which implies that $g=\sum_{i=1}^{k} \alpha(h_i) \alpha(f_i)$. Thus, $\{\alpha(f_1),\ldots,\alpha(f_k)\}$ is indeed a generating set for $I_{\mathrm{Cyc}(M)}$. This concludes the proof.  
\end{enumerate}          	   
\end{proof}

\section{Cycle ideals generated in small degrees}\label{degree 2}

In this section, cycle ideals of matroids $M$ with small values for $\mu(M)$ are considered. In particular, we discuss when cycle ideals are generated by quadrics. In the following denoted by~$K_n$ we mean the complete graph with~$n$ vertices.     

\begin{Lemma}\label{degree2}
Let $M$ be a binary matroid. Consider the following statements:
\begin{enumerate}
\item[{\em(a)}] $\mu (M)\leq 2$.
\item[{\em(a$'$)}] $M$ is $M(K_4)$-minor free.
\item[{\em(b)}] $M$ is $M(K_4)$-g-series minor free.
\item[{\em(c)}] $M$ has no $M(K_4)$ as a minor obtained by deletions, series contractions or coloop contractions.  
\end{enumerate}
Then the following implications hold:
\[
(a)\implies (b)\implies (c),
\]
and 
\[
(a')\implies (b)\implies (c).
\]
\end{Lemma}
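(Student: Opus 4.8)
The plan is to treat the two ``soft'' implications by comparing classes of minors, and then to reduce the substantive implication $(a)\implies(b)$ to a single explicit computation of $\mu(M(K_4))$.

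First I would dispose of $(b)\implies(c)$ and $(a')\implies(b)$, both of which are essentially definitional. By Definition~\ref{g-series minor-definition}, the minors obtained from $M$ using only deletions, series contractions and coloop contractions form a subclass of the g-series minors of $M$ (the latter additionally permit binary matroidal retracts, cf.\ Remark~\ref{comparison of minor types}). Hence being $M(K_4)$-g-series minor free forces the absence of any $M(K_4)$ inside this smaller class, which is exactly $(b)\implies(c)$. For $(a')\implies(b)$ I would observe that every g-series minor is in particular a minor: each of the four allowed building operations (deletion, series contraction, coloop contraction, and a binary matroidal retract $M\mapsto M/E'$) is itself a deletion or a contraction, so any composition of them produces an ordinary minor. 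Thus the g-series minors of $M$ form a subclass of the minors of $M$, and $M(K_4)$-minor freeness implies $M(K_4)$-g-series minor freeness.

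For $(a)\implies(b)$ I would argue by contraposition: assume $M$ is binary and possesses a g-series minor $N\iso M(K_4)$, and deduce $\mu(M)>2$. By Corollary~\ref{algebra retract-corollary}, $\KK[\mathrm{Cyc}(N)]$ is an algebra retract of $\KK[\mathrm{Cyc}(M)]$, so Lemma~\ref{mu}~(b) gives $\mu(M(K_4))=\mu(N)\le\mu(M)$. Everything therefore reduces to the single inequality $\mu(M(K_4))>2$, and in fact I expect to establish the exact value $\mu(M(K_4))=4$.

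The computation of $\mu(M(K_4))$ is the heart of the argument and the step requiring the most care. First I would list $\mathrm{Cyc}(M(K_4))$ explicitly: the circuits are the four triangles $T_1,\dots,T_4$ and the three Hamiltonian $4$-cycles $Q_1,Q_2,Q_3$ of $K_4$, and since no two circuits of $K_4$ are edge-disjoint, there are no proper disjoint unions, whence $\mathrm{Cyc}(M(K_4))=\{\emptyset,T_1,\dots,T_4,Q_1,Q_2,Q_3\}$ has exactly $8$ elements. Since $K_4$ is $3$-edge-connected with no bridges, $M(K_4)$ is cosimple and $d(M(K_4))=6$; then \eqref{height} yields $\height I_{\mathrm{Cyc}(M(K_4))}=8-6-1=1$. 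As $I_{\mathrm{Cyc}(M(K_4))}$ is a prime (toric) ideal of height $1$ in the polynomial ring $S_{M(K_4)}$, which is a UFD, it is principal. To pin down its generator I would use that in $K_4$ each edge lies in exactly two triangles and in exactly two of the three $4$-cycles, so that
\[
\sum_{i=1}^{4}\chi_{T_i}=(2,2,2,2,2,2)=\sum_{j=1}^{3}\chi_{Q_j}.
\]
Balancing the number of factors by inserting $\chi_\emptyset=0$ on the right produces the degree-$4$ binomial $x_{T_1}x_{T_2}x_{T_3}x_{T_4}-x_\emptyset x_{Q_1}x_{Q_2}x_{Q_3}\in I_{\mathrm{Cyc}(M(K_4))}$. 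This relation is primitive (its coefficient vector has gcd $1$), and since the relation lattice has rank equal to $\height I_{\mathrm{Cyc}(M(K_4))}=1$, it generates the whole lattice; hence every binomial relation has degree a multiple of $4$. Consequently $I_{\mathrm{Cyc}(M(K_4))}$ is generated by this single binomial and $\mu(M(K_4))=4>2$. Feeding this back gives $\mu(M)\ge 4>2$, contradicting $\mu(M)\le 2$, which establishes $(a)\implies(b)$ and completes the plan.
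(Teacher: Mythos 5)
Your proof is correct and follows the same skeleton as the paper's: the implications $(b)\implies(c)$ and $(a')\implies(b)$ are read off from Definition~\ref{g-series minor-definition} (every g-series minor is in particular a minor, and the operations in~(c) are a subclass of those allowed for g-series minors), and $(a)\implies(b)$ is the contrapositive of Corollary~\ref{mu comparison}~(c) combined with the fact that $\mu(M(K_4))=4>2$. The one genuine difference is how that last numerical fact is obtained: the paper invokes the self-duality of $M(K_4)$ to identify $I_{\mathrm{Cyc}(M(K_4))}$ with the cut ideal of $K_4$ and then cites \cite[Example~7.1]{RS} for the degree-$4$ generator, whereas you compute $\mu(M(K_4))$ from scratch. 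Your computation checks out: $|\mathrm{Cyc}(M(K_4))|=8$ since no two of the seven circuits of $K_4$ are edge-disjoint, $M(K_4)$ is cosimple so $d(M(K_4))=6$, hence by \eqref{height} the cycle ideal is a height-one prime in a UFD and therefore principal, and the primitive relation $\sum_i\chi_{T_i}=\sum_j\chi_{Q_j}$ produces the degree-$4$ generator $x_{T_1}x_{T_2}x_{T_3}x_{T_4}-x_\emptyset x_{Q_1}x_{Q_2}x_{Q_3}$. This buys self-containedness at the cost of a paragraph of computation. One small imprecision: your claim that ``every binomial relation has degree a multiple of $4$'' is only true for the binomials $x^{u_+}-x^{u_-}$ attached to lattice vectors $u$ (a general homogeneous binomial in the ideal can have any degree $\geq 4$, e.g.\ after multiplying by a variable and adding a common factor); but since the toric ideal is generated by the binomials of the former kind, and each $x^{ku_+}-x^{ku_-}$ is divisible by the degree-$4$ generator, the conclusion $\mu(M(K_4))=4$ stands. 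Alternatively, principality plus the absence of elements of degree $\leq 1$ (Lemma~\ref{mu}~(a)) already forces the irreducible generator to divide, hence equal up to scalar, the exhibited degree-$4$ binomial.
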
  

\begin{proof}
Note that since $M(K_4)$ is self-dual, its cycle polytope is affinely isomorphic to the cut polytope of $K_4$. Therefore,~(a)~implies~(b), by Corollary~\ref{mu comparison}, since $I_{\mathrm{Cyc}(M(K_4))}$ has a minimal generator of degree $4$, see, e.~g.,~\cite[Example~7.1]{RS}. The other implications simply follow from Definition~\ref{g-series minor-definition}. 
\end{proof}

\begin{Remark}\label{F7-2}
{\em Here, we discuss the statements of Lemma~\ref{degree2} in the cases of some well known matroids. 
\begin{enumerate}
	\item If $M$ is a connected graphic or cographic matroid, then by \cite[Corollary~5.4.12]{Ox} and the fact that $M(K_4)$ is a self-dual matroid it follows that~(c) implies~(a$'$). So~(b),~(c) and~(a$'$) are equivalent in these cases. But, the next part shows that these equivalences do not hold in general.    
	
	\item As we observed in Example~\ref{F7-1}, the cycle ideal of $F_7^*$ is zero. It follows from \cite[Proposition~6.4.8]{Ox} that $F_7$ and hence its dual $F_7^*$ are binary matroids. Therefore, $F_7^*$  satisfies conditions~(a),~(b) and~(c) in Lemma~\ref{degree2}. On the other hand, all contractions $F_7^*/e$, for any $e\in E(F_7^*)$, are isomorphic to $M(K_4)$, see, e.g., \cite[Example~1.5.6]{Ox}. Thus, $F_7^*$ does not satisfy~(a$'$).  
\end{enumerate}
}
\end{Remark} 

By a \emph{series-parallel network} we mean a 2-connected graph obtained from the complete graph~$K_2$ by subdividing and duplicating edges. It is clear that any series-parallel network is a planar graph. There are several ways to describe this class of graphs, see, e.g., \cite{Ep}. There are also some equivalent statements in terms of the graphic matroid, see, e.g., \cite[Corollary~5.4.12]{Ox}. Using the two latter descriptions, Engstr\"om showed in \cite{En} the following which in particular proved \cite[Conjecture~3.5]{SS}.

\begin{Proposition}\label{Eng}
	{\em (See} \cite[Corollary~2.8]{En}{\em )} 
	Let $G$ be a series-parallel network. Then $\mu (M(G)^*)\leq 2$.  
\end{Proposition}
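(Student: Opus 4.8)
The plan is to prove Proposition~\ref{Eng} by combining the structural characterization of series-parallel networks via excluded minors with the degree-bound machinery developed in Lemma~\ref{degree2} and Corollary~\ref{mu comparison}. The key observation is that a $2$-connected graph $G$ is a series-parallel network if and only if its graphic matroid $M(G)$ is $M(K_4)$-minor free; this is exactly the content of \cite[Corollary~5.4.12]{Ox}, which the excerpt already cites. Since $M(G)^*$ is a cographic (hence binary) matroid and $M(K_4)$ is self-dual, the excluded-minor condition transfers to the dual: $M(G)$ being $M(K_4)$-minor free is equivalent to $M(G)^*$ being $M(K_4)$-minor free. The strategy is therefore to feed this into the implication chain recorded in Remark~\ref{F7-2}~(1).

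First I would set $M=M(G)^*$, which is a connected cographic matroid whenever $G$ is $2$-connected (connectivity of $G$ gives connectivity of $M(G)$, and $M$ is connected iff $M(G)$ is, as noted in Section~\ref{matroids}). Next I would record that $G$ being series-parallel forces $M(G)$ to be $M(K_4)$-minor free, and hence, by self-duality of $M(K_4)$ together with the fact that the class of minors is closed under duality via \eqref{dual-deletion-contraction}, that $M=M(G)^*$ is also $M(K_4)$-minor free. This is precisely condition~(a$'$) of Lemma~\ref{degree2} for the binary matroid $M$. Then, invoking the equivalence of (a$'$), (b) and (c) in the connected cographic case established in Remark~\ref{F7-2}~(1), I would conclude that condition~(c) holds for $M$.

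The final step is to pass from (c) back to the degree bound~(a). Here the subtle point is that Lemma~\ref{degree2} as stated only gives the implications $(a)\implies(b)\implies(c)$ and $(a')\implies(b)\implies(c)$, so a priori the bound $\mu(M)\leq 2$ does not follow formally from (c) alone. However, in the connected cographic situation of Remark~\ref{F7-2}~(1), we have the full equivalence of (a$'$), (b) and (c), and what is genuinely needed is the reverse implication $(b)\implies(a)$ for this class. I would therefore argue that for connected cographic matroids the chain of implications closes into equivalences including~(a), so that $M(K_4)$-g-series-minor-freeness yields $\mu(M)\leq 2$ directly. Concretely, one establishes that whenever $\mu(M)\geq 3$ the matroid must contain $M(K_4)$ as a g-series minor, by exhibiting a minimal generator of degree at least $3$ and tracing it back, via Corollary~\ref{mu comparison}, to an obstruction isomorphic to $M(K_4)$.

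The hard part will be justifying this reverse implication $(b)\implies(a)$ rather than merely the forward direction recorded in Lemma~\ref{degree2}; the forward statements are the easy contrapositive consequences of retraction monotonicity, whereas showing that the \emph{absence} of an $M(K_4)$ g-series minor actually \emph{forces} all minimal generators to have degree at most $2$ requires genuine structural input about cut algebras of series-parallel networks. The cleanest route is likely to cite Engstr\"om's original argument \cite[Corollary~2.8]{En} for this direction and to treat the present discussion as the translation into the matroid-retract language of this paper, which is exactly how the proposition is framed (``Engstr\"om showed in \cite{En} the following''). Thus I expect the proof to reduce to assembling the equivalences of Remark~\ref{F7-2}~(1) with the characterization of series-parallel networks, with the substantive degree bound borrowed from \cite{En}.
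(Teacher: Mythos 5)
The paper gives no internal proof of this proposition at all: it is imported verbatim from Engstr\"om as \cite[Corollary~2.8]{En}, and the surrounding text (``Engstr\"om showed in \cite{En} the following\dots'') makes clear that the substantive degree bound is external input. Your final paragraph, which falls back on citing \cite{En} for the hard direction, therefore coincides exactly with what the paper actually does, and to that extent the proposal is fine.

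However, the internal derivation you sketch before reaching that conclusion contains a genuine circularity that you should not leave in place. Remark~\ref{F7-2}~(1) establishes the equivalence of (a$'$), (b) and (c) only; the extension of that equivalence to include the degree bound~(a) is precisely Theorem~\ref{degree2-characterization}, and the paper's proof of that theorem \emph{uses} Proposition~\ref{Eng} (``Thus, Theorem~\ref{Eng} yields $\mu(M(G)^*)\leq 2$''). So invoking ``the full equivalence in the connected cographic case'' to deduce $(b)\implies(a)$ assumes the proposition you are trying to prove. Your suggested repair --- ``exhibiting a minimal generator of degree at least $3$ and tracing it back, via Corollary~\ref{mu comparison}, to an obstruction isomorphic to $M(K_4)$'' --- also does not work with the paper's machinery: Corollary~\ref{mu comparison} is a one-way monotonicity statement ($\mu(M')\leq\mu(M)$ for a g-series minor $M'$ of $M$), and nothing in the paper lets you convert a high-degree minimal generator of $I_{\mathrm{Cyc}(M)}$ into an $M(K_4)$ minor. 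The honest proof is simply the citation to \cite[Corollary~2.8]{En}, whose argument rests on Engstr\"om's structural analysis of series-parallel networks (iterated subdivision and duplication of edges starting from $K_2$) and is not reproduced or reproved in this paper; everything else in your write-up should be deleted or clearly marked as context rather than proof.
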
  

In the next theorem we continue the discussion of  Remark~\ref{F7-2}~(1) for graphic and cographic matroids. 
In the cographic case, this generalizes the main result of \cite{En} related to cut ideals, while the graphic case related to Eulerian ideals is new to the best of our knowledge.  

\begin{Theorem}\label{degree2-characterization}
	Let $M$ be a graphic or cographic matroid of a simple connected graph. Then the statements~(a), (b), (c) and (a$'$) in Lemma~\ref{degree2} are equivalent.
\end{Theorem}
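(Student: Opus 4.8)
The implications $(a)\implies (b)\implies (c)$ and $(a')\implies (b)$ are already contained in Lemma~\ref{degree2}, while $(c)\implies (a')$ for connected graphic and cographic matroids was recorded in Remark~\ref{F7-2}~(1) (via \cite[Corollary~5.4.12]{Ox} and the self-duality of $M(K_4)$). Hence $(a')$, $(b)$ and $(c)$ are pairwise equivalent and each of them is implied by $(a)$, so the whole theorem reduces to the single implication $(a')\implies (a)$: assuming that $M$ has no $M(K_4)$-minor, I would prove $\mu(M)\leq 2$.

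First I would reduce to the case that the underlying graph $G$ is $2$-connected. A bridge of $G$ is a coloop of $M(G)$ (equivalently a loop of $M(G)^*$), and by Theorem~\ref{simplification}~(a) a coloop contraction leaves $\mu$ unchanged, so one may assume $G$ is bridgeless. If $G$ still has a cut vertex, then $M(G)$ (resp.\ $M(G)^*$) is the direct sum of the matroids attached to the $2$-connected blocks of $G$, and the associated cycle algebra is the Segre product of the cycle algebras of the blocks; one then checks that this Segre product is again generated in degree at most~$2$ once each factor is, so it suffices to treat a single $2$-connected block. From now on $G$ is $2$-connected, and by \cite[Corollary~5.4.12]{Ox} together with the self-duality of $M(K_4)$ the hypothesis $(a')$ is equivalent, in both the graphic and the cographic situation, to $G$ being a series-parallel network.

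In the cographic case $M=M(G)^*$ this is exactly Engstr\"om's theorem: by Proposition~\ref{Eng}, a series-parallel network $G$ satisfies $\mu(M(G)^*)\leq 2$, which is the required bound. The genuinely new point is the graphic case $M=M(G)$. Here I would exploit that a series-parallel network is planar, so that $G$ has a planar dual $G^*$ with $M(G)\cong M(G^*)^*$; that is, the cycle (Eulerian) matroid of $G$ is itself cographic. Since $M(G)$ has no $M(K_4)$-minor and $M(K_4)$ is self-dual, $M(G^*)=M(G)^*$ also has no $M(K_4)$-minor, so $G^*$ is again a series-parallel network; moreover $G^*$ is $2$-connected because $M(G^*)=M(G)^*$ is connected exactly when $M(G)$ is. Applying Proposition~\ref{Eng} to $G^*$ gives $\mu(M(G^*)^*)\leq 2$, and since $\mu$ is an isomorphism invariant of the matroid this yields $\mu(M(G))=\mu(M(G^*)^*)\leq 2$, completing $(a')\implies(a)$ and hence the proof.

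The heart of the argument, and the place I expect to be most delicate, is the graphic case: everything rests on the observation that for the $M(K_4)$-minor-free (hence planar) graphs under consideration the graphic matroid coincides with the cographic matroid of the planar dual, which is precisely why the new Eulerian statement can be deduced from Engstr\"om's known cographic result. The second technical point to pin down is the direct-sum/Segre-product bookkeeping used to descend to a single $2$-connected block; verifying that a Segre product of two quadratically defined cycle algebras stays quadratically defined (for instance by producing a quadratic Gr\"obner basis, so that the relevant algebras are Koszul) is the routine-but-nontrivial step that must be carried out with care.
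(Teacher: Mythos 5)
Your proof is correct and follows essentially the same route as the paper: both reduce everything to $(a')\Rightarrow(a)$, settle the cographic case by Engstr\"om's theorem (Proposition~\ref{Eng}), and settle the graphic case by observing that a series-parallel network is planar, so that $M(G)$ is cographic (the paper cites \cite[Corollary~6.6.6]{Ox} where you invoke the planar dual explicitly), and then falling back on the cographic case. The only divergence is your preliminary reduction to $2$-connected blocks via Segre products, which the paper skips entirely (it passes directly from connectedness of $G$ to connectedness of $M(G)$ when applying \cite[Corollary~5.4.12]{Ox}); that extra step is sound---and arguably tightens a point the paper glosses over---but it is not part of the published argument.
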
  

\begin{proof}
According to Lemma~\ref{degree2} and Remark~\ref{F7-2}~(a), it remains to show that the equivalent conditions~(b), (c) and (a$'$) imply~(a) in the graphic and cographic cases.  
	
First assume that $M=M(G)^*$ where $G$ is a simple connected graph, and assume that (a$'$) holds. Then, $M^*=M(G)$ is also connected and $M(K_4)$-minor free, by self-duality of $M(K_4)$. Hence, by \cite[Corollary~5.4.12]{Ox}, it follows that $G$ is a series-parallel network. Thus, Theorem~\ref{Eng} yields $\mu (M(G)^*)\leq 2$, and hence~(a) holds.     	  

Next, assume that $M=M(G)$ where $G$ is a simple connected graph, and assume that (a$'$) holds. Then, \cite[Corollary~5.4.12]{Ox} implies that $G$ is a series-parallel network, and hence a planar graph. So, it follows from \cite[Corollary~6.6.6]{Ox} that $M$ is a cographic matroid. Therefore, the desired result follows from the previous case.   
\end{proof}   

\begin{Remark}\label{preconj}
	{\em
  \begin{enumerate}
  	\item A graphic matroid $M(G)$ is a connected simple binary matroid for which the statements~(a), (b), (c) and (a$'$) in Lemma~\ref{degree2} are equivalent according to Theorem~\ref{degree2-characterization}, when $G$ is a connected simple graph. 
  	 
  	\item A cographic matroid $M(G)^*$ is a connected cosimple binary matroid for which the statements~(a), (b), (c) and (a$'$) in Lemma~\ref{degree2} are equivalent according to Theorem~\ref{degree2-characterization}, when $G$ is a connected simple graph. 
  	
  	\item The matroid $F_7^*$ is an example of a simple and cosimple connected binary matroid that is not cographic and for which the statements~(a), (b) and (c) in Lemma~\ref{degree2} are equivalent, but (a$'$) does not hold, as we saw in Remark~\ref{F7-2}. 
  \end{enumerate}	
}
\end{Remark}

Having Theorem~\ref{degree2-characterization} and Remark~\ref{preconj} in mind, we now pose the following conjecture:

\begin{Conjecture}\label{degree2-Conj}
Let $M$ be a connected binary matroid which is simple or cosimple. Then the statements~(a), (b) and (c) in Lemma~\ref{degree2} are equivalent.
\end{Conjecture}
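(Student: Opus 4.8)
The plan is to establish only the missing implication $(a')\Rightarrow(a)$, since everything else is already in place. By Lemma~\ref{degree2} we have $(a)\Rightarrow(b)\Rightarrow(c)$ and $(a')\Rightarrow(b)\Rightarrow(c)$, and by Remark~\ref{F7-2}~(1) the three conditions $(a')$, $(b)$ and $(c)$ are mutually equivalent for connected graphic and cographic matroids. Hence, once $(a')\Rightarrow(a)$ is shown, all four statements coincide. The engine producing the bound $\mu\le 2$ will be Engstr\"om's result (Proposition~\ref{Eng}) for series-parallel networks, and the entire argument reduces to bringing both the graphic and the cographic case into a form where this bound applies.

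First I would treat the cographic case $M=M(G)^*$ with $G$ a simple connected graph, assuming $(a')$. The key move is to transfer $M(K_4)$-minor-freeness from $M$ to its dual: since $N$ is a minor of $M$ if and only if $N^*$ is a minor of $M^*$, and since $M(K_4)$ is self-dual, the matroid $M^*=M(G)$ is again $M(K_4)$-minor-free. It is also connected because $M$ is (connectedness is preserved under duality). The graphic characterization of series-parallel networks \cite[Corollary~5.4.12]{Ox} then identifies $G$ as a series-parallel network, and Proposition~\ref{Eng} yields $\mu(M)=\mu(M(G)^*)\le 2$, which is $(a)$.

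Next I would handle the graphic case $M=M(G)$ with $G$ simple connected by reducing it to the cographic one. From $(a')$ and \cite[Corollary~5.4.12]{Ox}, the graph $G$ is a series-parallel network, hence planar. Planarity of $G$ forces $M(G)$ to be cographic by \cite[Corollary~6.6.6]{Ox}, so $M$ is simultaneously graphic and cographic; the argument of the previous paragraph (applied to the cographic representation) then gives $\mu(M)\le 2$.

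The main obstacle is not computational, as Engstr\"om's theorem does the heavy lifting: the difficulty lies entirely in correctly wiring together duality, connectedness and planarity so that Proposition~\ref{Eng} becomes applicable. The two delicate points to get right are the transfer of $M(K_4)$-minor-freeness across duality, which relies squarely on $M(K_4)$ being self-dual, and the passage in the graphic case through planarity to a cographic representation, after which no genuinely new argument is needed.
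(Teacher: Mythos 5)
This statement is a \emph{conjecture} in the paper; the authors give no proof of it, and your argument does not close the gap. What you have written is, almost verbatim, the proof of Theorem~\ref{degree2-characterization}, which is already established in the paper and covers only graphic and cographic matroids of simple connected graphs. The conjecture is about an arbitrary connected binary matroid $M$ that is simple or cosimple; such a matroid need not be graphic or cographic, so the reduction to series-parallel networks and Proposition~\ref{Eng} is simply unavailable in the general case.

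The more fundamental problem is your choice of starting point. You propose to prove $(a')\Rightarrow(a)$ and to rely on the equivalence of $(a')$, $(b)$ and $(c)$. But that equivalence is stated in Remark~\ref{F7-2}~(1) \emph{only} for connected graphic and cographic matroids, and it fails in general: the matroid $F_7^*$ of Example~\ref{F7-1} and Remark~\ref{F7-2}~(2) satisfies $(a)$, $(b)$ and $(c)$ but not $(a')$, since every single-element contraction of $F_7^*$ is isomorphic to $M(K_4)$. This is precisely why the conjecture asks only for the equivalence of $(a)$, $(b)$ and $(c)$ and deliberately omits $(a')$. The genuinely open content is the implication $(c)\Rightarrow(a)$ (equivalently $(b)\Rightarrow(a)$) for general connected simple or cosimple binary matroids, and any proof must proceed without passing through $M(K_4)$-minor-freeness, planarity, or Engstr\"om's theorem. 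Your proposal does not contain an idea for that step.
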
 

The next lemma also gives partial information about cycle ideals generated in degrees~at most~5.  

\begin{Lemma}\label{higher degrees}
	Let $M$ be a binary matroid. Consider the following statements:
	\begin{enumerate}
		\item[{\em(a)}] $\mu (M)\leq 5$;
		\item[{\em(b)}] $M$ is $M(K_5)^*$-g-series minor free;
		\item[{\em(c)}] $M$ has no $M(K_5)^*$ as a minor obtained by deletions, series contractions or coloop contractions. 
	\end{enumerate}
	Then the following implications hold:
	\[
	(a)\implies (b)\implies (c).
	\]
\end{Lemma}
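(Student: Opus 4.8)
The plan is to follow closely the strategy of the proof of Lemma~\ref{degree2}, replacing $M(K_4)$ by the cographic matroid $M(K_5)^*$. The formal skeleton is identical: the implication $(b)\Rightarrow(c)$ is immediate from the definitions, while $(a)\Rightarrow(b)$ is obtained by contraposition using the monotonicity of $\mu$ under g-series minors recorded in Corollary~\ref{mu comparison}. The only genuinely new input is a lower bound on $\mu(M(K_5)^*)$.

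For $(b)\Rightarrow(c)$ I would simply invoke Definition~\ref{g-series minor-definition} together with Remark~\ref{comparison of minor types}: any matroid obtained from $M$ by a sequence of deletions, series contractions and coloop contractions is, in particular, a g-series minor of $M$. Hence if $M$ admits no $M(K_5)^*$ among its g-series minors, then it certainly admits no $M(K_5)^*$ among the more restricted minors listed in $(c)$. This step is a one-line consequence of the definitions and presents no difficulty.

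For $(a)\Rightarrow(b)$ I would argue contrapositively. Suppose $M(K_5)^*$ is a g-series minor of $M$. Since $M$ is binary, Corollary~\ref{mu comparison}(c) yields $\mu(M(K_5)^*)\leq\mu(M)$. Now $M(K_5)^*$ is the cographic matroid of the connected graph $K_5$, so by Example~\ref{cut ideal} its cycle ideal is identified with the cut ideal $I_{\mathrm{Cut}(K_5)}$, and therefore $\mu(M(K_5)^*)$ equals the highest degree of a minimal homogeneous generator of $I_{\mathrm{Cut}(K_5)}$. The fact I need is that this number is at least $6$, i.e. $\mu(M(K_5)^*)\geq 6$. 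Granting this, the assumption that $M(K_5)^*$ is a g-series minor gives $\mu(M)\geq 6>5$, contradicting $(a)$; hence $(a)$ forces $M(K_5)^*$ not to be a g-series minor of $M$, which is precisely $(b)$.

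The main obstacle is exactly the verification that $\mu(M(K_5)^*)\geq 6$ (indeed I expect it to be $=6$). In contrast to the degree bound for $M(K_4)$, which was available from \cite[Example~7.1]{RS}, this is a larger and more delicate computation on the cut ideal of $K_5$: one must produce an explicit balanced relation among characteristic vectors of cuts of $K_5$ of total degree $6$ and, crucially, argue that it is \emph{not} expressible through relations of lower degree, or else cite an existing computation of the cut ideal of $K_5$ from the literature (e.g.~\cite{SS}). I anticipate that certifying \emph{minimality} of such a degree-$6$ generator — rather than merely its membership in $I_{\mathrm{Cut}(K_5)}$ — will be the technically subtle point, whereas once this single fact is in hand both $(a)\Rightarrow(b)$ and $(b)\Rightarrow(c)$ follow routinely from the retract machinery already developed.
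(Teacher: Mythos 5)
Your proposal is correct and follows essentially the same route as the paper: $(b)\Rightarrow(c)$ from Definition~\ref{g-series minor-definition}, and $(a)\Rightarrow(b)$ via Corollary~\ref{mu comparison} once one knows $\mu(M(K_5)^*)\geq 6$. The ``main obstacle'' you flag is resolved in the paper exactly as you anticipate in your fallback option, namely by citing the computation $\mu(M(K_5)^*)=6$ from \cite[Table~1]{SS}, so no new explicit degree-$6$ generator or minimality certificate needs to be produced.
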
  

\begin{proof}
It follows from \cite[Table~1]{SS} that $\mu(M(K_5)^*)=6$. Therefore,~(a)~implies~(b), by Corollary~\ref{mu comparison}. The other implication is clearly obtained from Definition~\ref{g-series minor-definition}.  	 
\end{proof}

It would be interesting to provide certain classes of binary matroids for which the statements~(a),~(b) and~(c) in Lemma~\ref{higher degrees} are equivalent, or to give any explicit characterization of matroids $M$ with $\mu(M)\leq 5$. Here, we pose the following conjecture:  

\begin{Conjecture}\label{conj-degree5}
 Let $M$ be a connected binary matroid which is simple or cosimple. Then the statements~(a), (b) and (c) in Lemma~\ref{higher degrees} are equivalent.	
\end{Conjecture}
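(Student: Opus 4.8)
The plan is to mirror the proof of Theorem~\ref{degree2-characterization}. Lemma~\ref{higher degrees} already supplies $(a)\implies(b)\implies(c)$, so it suffices to prove the single implication $(c)\implies(a)$; adjoining it closes the cycle $(a)\implies(b)\implies(c)\implies(a)$ and yields the equivalence. Thus I would assume $M$ is a connected binary matroid that is simple or cosimple and carries no $M(K_5)^*$ as a minor obtained by deletions, series contractions and coloop contractions, and try to deduce $\mu(M)\le 5$. As in the quadratic case, the two ingredients needed are (i) an excluded-minor structure theorem describing such $M$ and (ii) a uniform degree bound on the resulting structural class, playing the roles that the series-parallel characterization \cite[Corollary~5.4.12]{Ox} and Engstr\"om's theorem (Proposition~\ref{Eng}) played before.

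I would treat the cosimple case first, since there condition $(c)$ is genuinely restrictive. For $M=M(G)^*$ with $G$ simple and connected, the identities \eqref{dual-deletion-contraction} turn deletions, series contractions and coloop contractions of $M(G)^*$ into contractions, parallel deletions and loop deletions of $M(G)$, so the restricted minors of $M(G)^*$ are exactly the duals of the parallel-type minors of $M(G)$. I would first establish the analogue of Remark~\ref{F7-2}~(1), namely that for connected cographic matroids this restricted notion coincides with the unrestricted one, so that $(c)$ becomes equivalent to $M(G)^*$ being $M(K_5)^*$-minor free, i.e.\ to $G$ being $K_5$-minor free. The target then reads $\mu(\KK[\mathrm{Cut}(G)])\le 5$ for every $K_5$-minor-free graph $G$. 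Here I would invoke Wagner's description of $K_5$-minor-free graphs as clique-sums (along edges and triangles) of planar graphs and copies of the Wagner graph, and control $\mu$ under such sums using the retract machinery of Section~\ref{algebra retracts} (Corollary~\ref{mu comparison}) together with a toric-fiber-product description of cut algebras of clique-sums; the base cases would then require the bound $\mu\le 5$ for planar graphs and for the Wagner graph.

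The simple case has a different flavour. Since $M(K_5)^*$ is non-graphic (it is one of Tutte's excluded minors for graphic matroids) and every $g$-series minor of a graphic matroid is again graphic, conditions $(b)$ and $(c)$ hold automatically for every simple connected graphic matroid. Consequently the conjecture \emph{forces} the uniform statement $\mu(M(G))\le 5$, i.e.\ that Eulerian ideals of simple connected graphs are generated in degree at most $5$; isolating this uniform bound is, I believe, the true heart of the graphic case, and I would attempt it by the same clique-sum/retract analysis, reducing to graphic building blocks. For a simple binary $M$ that is neither graphic nor cographic (already $F_7$ is such a matroid, and it vacuously satisfies $(c)$ for cardinality reasons), I would hope to split $M$ via $1$-, $2$- and $3$-sums into smaller pieces inheriting the absence of an $M(K_5)^*$ minor, and bound $\mu(M)$ by the maximum of the $\mu$ of the pieces through algebra retracts.

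The principal obstacle, and the reason the statement remains conjectural, is ingredient (ii): there is no known analogue of Engstr\"om's theorem yielding a uniform degree-$5$ bound for the structural class at hand, and the building blocks (planar graphs, the Wagner graph) are far less transparent than series-parallel networks. Two further difficulties compound this. First, $\mu$ is only known to be monotone under the operations producing algebra retracts (Corollary~\ref{mu comparison}), so its behaviour under clique-sums and Seymour's $2$- and $3$-sums must be analysed directly, and showing that these sums do not raise $\mu$ above the maximum over the pieces—presumably via a toric-fiber-product argument—is the crux. Second, for binary matroids possessing $F_7$-minors no Seymour-type decomposition is available, so arbitrary simple or cosimple binary $M$ cannot be reduced to graphic, cographic and $R_{10}$ pieces, and the gap between the restricted minors in $(c)$ and ordinary minors (the analogue of Remark~\ref{F7-2}~(1)) is unclear outside the connected graphic and cographic cases. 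A complete proof would most plausibly combine a refined excluded-minor structure theorem with a decomposition result for cycle algebras under sums, thereby also settling the related Sturmfels--Sullivant-type conjecture from \cite{SS}.
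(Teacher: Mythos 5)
The statement you are asked to prove is stated in the paper only as Conjecture~\ref{conj-degree5}: the authors give no proof, supplying just the implications $(a)\implies(b)\implies(c)$ of Lemma~\ref{higher degrees} and the remark that for cographic matroids of simple connected graphs condition~(c) amounts to $G$ being $K_5$-minor free. There is therefore no argument in the paper to compare yours against, and your proposal is, by your own account, a research program rather than a proof. Its architecture is the right one (mirror Theorem~\ref{degree2-characterization}: translate~(c) into an excluded-minor description, then establish a uniform degree bound on the resulting class), and two of your structural observations are correct and worth recording: in the cographic case the discussion following the conjecture in the paper shows that a $K_5$-minor of $G$ yields an $M(K_5)^*$-minor of $M(G)^*$ of the restricted type, so~(c) is indeed equivalent to $K_5$-minor-freeness of $G$; and in the graphic case (b) and~(c) hold vacuously because $M(K_5)^*$ is non-graphic while every minor of a graphic matroid is graphic, so there the conjecture is exactly the unconditional assertion $\mu(M(G))\le 5$ for all simple connected $G$.

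The genuine gaps are the two you name, and they are fatal to the proposal as a proof. First, there is no analogue of Engstr\"om's theorem (Proposition~\ref{Eng}) for the base cases: no published bound gives $\mu\le 5$ for cut ideals of planar graphs or of the Wagner graph, nor for Eulerian ideals of any comparably large class, and without such a bound the reduction has nothing to reduce to. Second, Corollary~\ref{mu comparison} only gives monotonicity of $\mu$ under the operations producing algebra retracts; it says nothing about how $\mu$ behaves when a matroid is assembled from pieces by clique-sums or Seymour $1$-, $2$-, $3$-sums, so even granting the base cases you cannot propagate the bound upward. A toric-fiber-product argument is plausible for $0$- and $1$-sums but is unproven here for $2$- and $3$-sums of cycle algebras. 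Finally, for simple binary matroids that are neither graphic nor cographic your plan has no decomposition to appeal to at all. Since already the cographic special case is the open \cite[Conjecture~3.6]{SS}, the proposal should be read as a correct identification of what a proof would require, not as a proof.
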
 

Note that in the case of cographic matroids of a simple connected graph (i.e., cut ideals case) Conjecture~\ref{conj-degree5} was essentially stated in \cite[Conjecture~3.6]{SS}. In the latter paper, the authors even suggest $\mu (M(G)^*)\leq 4$ instead of $\mu (M(G)^*)\leq 5$. Also, note that \cite[Conjecture~3.6]{SS} was stated in terms of any minors in graphs, not special types of minors. Indeed, it is easily seen that if a simple graph $G$ has a complete graph~$K_n$ as a (graphical) minor, then it can be obtained only by contraction of edges, deletion of loops or deletion of multiple edges (as well as removal of isolated vertices). This means that, the corresponding cographic matroid $M(G)^*$ has $M(K_n)^*$ as a minor which is obtained only by deletions, series contractions or coloop contractions, (i.e. the three conditions mentioned in Lemma~\ref{higher degrees}~(c)). 

Seeing Theorem~\ref{degree2-characterization} as well as the two conjectures posed in this section, it is natural to ask if there are examples of g-series minors which are not obtained only from deletions, series contractions or coloop contractions. We end this section with such an example:

\begin{Example}\label{g-series not series}
	{\em 
	Let $M$ be the Fano plane $F_7$. Using the notation of Definition~\ref{binary matroidal retract-def}, let $E=\{e_1=4,e_2=5,e_3=3\}$ and $E'=\{e'_1=1,e'_2=2,e'_3=6\}$. Recall the circuits of $F_7$ from Example~\ref{F7-1}. We have that $E'=C_1$ is a circuit of $M$. It is easily seen that the desired conditions of Definition~\ref{binary matroidal retract-def} are satisfied. The circuits of $M/E'$ are as follows: 
	\[
	\{3,5\}, \{4,7\}, \{3,4\}, \{5,7\}, \{1,4,5\}, \{1,3,7\}.
	\] 
	Thus, $M/E'$ is a binary matroidal retract of $M$, and hence a g-series minor. One observes that this minor can not be obtained only by deletions, series contractions or coloop contractions. Indeed, $M\setminus E'$ is different from $M/E'$, since its only circuit is $\{3,4,5,7\}$. On the other hand, it can be seen that by replacing any of the three deletions in $M\setminus 1\setminus 2\setminus 6$ by contractions, we do not have series contractions or coloop contractions.    
}
\end{Example}

\end{document}